\newif\ifcomments
\let\newComments\newKibitzer
\newComments\Ov{Ov}{red}
\newComments\Bj{Bj}{blue}
\newComments\DL{DL}{red}
\newtheorem{lemma}{Lemma}[subsection]
\newtheorem{defi}[lemma]{Definition}
\newtheorem{ex}[lemma]{Example}
\newtheorem{rem}[lemma]{Remark}
\newtheorem{prop}[lemma]{Proposition}
\newtheorem{thm}[lemma]{Theorem}
\newtheorem{cor}[lemma]{Corollary}
  \font\bib=cmbx8 \font\eightit=cmti8
\def\oh{{\ts\frac{1}{2}}}
\def\cal{\mathcal}
\def\D{{\cal D}}
\def\F{{\cal F}}
\def\Vect{\mathrm{Vect}}
\def\Bbb{\mathbb}
\def\bN{\Bbb N}
\def\bR{\Bbb R}
\def\l{\lambda}
\def\Div{\mathop{\rm Div}\nolimits}
\def\oh{{\ts\frac{1}{2}}}
\def\ts{\textstyle}
\def\vol{\omega}
\title[Riemannian Curl in Contact Geometry]{Riemannian Curl in Contact Geometry}
\author{Sofiane Bouarroudj}
\address{Division of Science and Mathematics
\\New York University Abu Dhabi
\\Po Box 129188, United Arab Emirates}
\email{sofiane.bouarroudj@nyu.edu}
\author{Valentin Ovsienko}
\address{CNRS, Institut Camille Jordan
\\Universit\'e Claude Bernard Lyon~I
\\21 Avenue Claude Bernard, 69622 Villeurbanne Cedex, France}
\email{ovsienko@math.univ-lyon1.fr}
\begin{document}

\begin{abstract}
We consider a contact manifold with a pseudo-Riemannian metric and
define a contact vector field intrinsically associated to this pair of structures. 
We call this new differential invariant  the contact Riemannian curl. 
On a Riemannian manifold, Killing vector fields are those that annihilate the metric;
a Killing $1$-form is obtained from a Killing vector field by lowering indices.
We show that the contact Riemannian curl vanishes
if the metric is of constant curvature and the contact structure is defined by a Killing $1$-form.
We also show that the contact Riemannian curl has
a strong similarity with the Schwarzian derivative since it depends
only on the projective equivalence class of the metric. 
For the Laplace-Beltrami operator on a contact manifold,
the contact Riemannian curl is proportional to the subsymbol defined
in~arXiv:1205.6562. 
We also show that the contact Riemannian curl vanishes
on the (co)tangent bundle over a Riemannian manifold.
This implies that the corresponding subsymbol of the Laplace-Beltrami operator is identically zero.
\end{abstract}

\keywords{Contact geometry, Riemannian geometry, differential invariants}

\maketitle

\section{Introduction}  \label{Intro}

The principal object of this paper is related to
the notion of {\it invariant differential operator}, i.e., an
operator commuting with the action of the group of diffeomorphisms.
The notion of {\it differential invariant} is one of the oldest notions
of differential geometry.
The best known example is perhaps the curvature in all its avatars.
The topic to which the present work belongs was initiated by Veblen~\cite{Veb} who started a
systematic study of invariant differential operators on smooth manifolds.
The theory was intensively studied in the 80's in the context of
Gelfand-Fuchs cohomology; see~\cite{Fu86,GLS} and references
therein.

We consider a smooth manifold $M$ equipped simultaneously with
a contact structure and a pseudo-Riemannian metric.
We present a construction of a contact vector field
corresponding to these two structures;
we call this vector field the {\it contact Riemannian curl}.
Our construction is coordinate free and invariant
with respect to the action of the group of contact diffeomorphisms,
i.e., the contact Riemannian curl is a differential invariant.
Moreover, our goal is to define this differential invariant in a ``most symmetric'' way,
so that it is also invariant
with respect to natural equivalence relations.

One of the equivalence relations we consider is as follows.
Two metrics are called {\it projectively equivalent}
(or geodesically equivalent) if they have the same non-parametrized geodesics.
i.e., their Levi-Civita connections are projectively equivalent.
It turns out that the constructed contact Riemannian curl
is obtained as contraction of the metric with
a certain tensor field invariant with respect to this equivalence relation.
This implies, in particular, that the contact Riemannian curl of the pair 
(a metric of constant scalar curvature, contact structure defined by a Killing $1$-form) 
vanishes.
Projective invariance makes the notion of contact Riemannian curl
quite similar to that of classical {\it Schwarzian derivative}
(for various multi-dimensional generalizations
of the Schwarzian derivative see~\cite{BO,OT,B,OT1} and references therein).
We investigate this relation in more details.

Among the main properties of the contact Riemannian curl that we investigate,
there is its relation to the Laplace-Beltrami operator.
Differential operators on contact manifolds
were studied from the geometric point of view in a recent work \cite{CO12},
where the notion of {\it subsymbol} of a differential operator on a contact manifold
was introduced.
The subsymbol of a differential operator is a tensor field of degree lower
than that of the principal symbol.
Note that the subsymbol is not well-defined for an arbitrary manifold,
one needs a contact structure to obtain an invariant definition.
For a given second order differential operator, the subsymbol is just a contact vector field.
In the present paper, we consider the {\it Laplace-Beltrami operator}
associated to an arbitrary metric on a contact manifold
and calculate its subsymbol.
It turns out that this subsymbol is proportional to the contact Riemannian curl.

We also apply our general construction to a particularly
interesting example of a manifold that has natural
contact and Riemannian structures,
namely to the spherical
(or projectivized) cotangent bundle $ST^*M$
over a Riemannian manifold $(M,g)$.
The manifold $ST^*M$ is equipped with the canonical
 lift of the metric $g$.
We show that the contact Riemannian curl, 
and therefore the subsymbol of the Laplace-Beltrami operator,
is identically zero in this case.
Let us mention that
the projectivization of the cotangent bundle over a Riemannian manifold $M$,
as well as the sphere bundle $ST^*M$,
is an example of a ``real-complex'' manifold whose local invariants
were recently introduced and computed in~\cite{BGLS}.

At the end of the paper, we provide
several concrete examples of the contact Riemannian curls.
For instance, we calculate it for
the $3{\rm D}$-ellipsoid equipped with the conformally flat metric
introduced in \cite{Tab} and intensively used in~\cite{MT,DV}.

We believe that the differential invariants of a pair 
(a Riemannian metric, a contact structure) is worth a
systematic study.

\section{Contact geometry and tensor fields}  \label{CODG}

Contact geometry is an old classical subject,
that can be viewed as an odd-dimensional version of symplectic geometry.
Let $M$ be a contact manifold and $\dim(M)= 2\ell + 1$,
we will always assume that $\ell\geq1$.
Unlike a symplectic structure in symplectic geometry, a contact structure on $M$
is defined by a differential $1$-form $\theta$, called a {\it contact form},
determined up to a factor (a function),
and such that $d\theta$ is a $2$-form of rank $2\ell$.
It is important that a contact form is not intrinsically associated with
a contact structure.

A contact diffeomorphism (a contact vector field) is a diffeomorphism
(a vector field) preserving the contact structure.
It preserves a given contact form conformally, up to a factor.
The space of all contact vector fields can be identified
with the space of smooth functions, but
this correspondence depends on the choice of a contact form; see~\cite{Arn}.

In this section, we recall several standard facts of contact geometry ---
those of contact structure and contact vector fields ---
using somewhat unconventional notation of~\cite{OT,Ovs}
which are among our main references.
We show that the contact structure can be also described by a special tensor field,
which is a {\it weighted contact form}.
Contact vector fields are in one-to-one correspondence with
weighted densities of weight $-\frac{1}{\ell+1}$.

\subsection{Weighted densities}

A weighted density is a standard object in differential geometry.
In~order to make the definitions intrinsic, we recall here this notion.

Let $M$ be a manifold of dimension $n$.
For any $\l\in\bR$, we denote by $(\Lambda^n T^*M)^{\otimes\l}$
the line bundle of homogeneous complex valued functions of weight $\l$
on the determinant bundle $\Lambda^n TM$.
The space $\F_\l(M)$ of smooth sections of
$(\Lambda^n T^*M)^{\otimes\l}$ with complex coefficients
is called the space of {\it weighted densities} of weight~$\lambda$,
(or $\l$-{\it densities} for short).

\begin{ex}
{\rm
If the manifold $M$ is orientable and
if $\vol$ is a volume form with constant coefficients, then $\phi\,\vol^\l$,
where $\phi\in{}C^\infty(M)$, is a $\l$-density.
}
\end{ex}

The space $\F_\l(M)$ has the structure of a module over the Lie algebra
$\Vect(M)$ of all smooth vector fields on $M$.
We denote by $\mathrm{Div}$ the divergence operator
associated with a volume form $\vol$ on $M$.
That is, $L_X(\vol)=\mathrm{Div}(X)\,\vol$.
The action of a vector fields reads as follows:
\begin{equation}
\label{LieEq}
L_X(\phi\,\vol^\l)=
\left(X(\phi)+\l\mathrm{Div}(X) \phi
\right)\vol^\l,
\end{equation}
for every vector field $X$ and $\phi \in C^\infty(M)$.

\subsection{Contact manifolds}

A smooth manifold $M$ is called {\it contact} if it is equipped with
a completely non-integrable distribution
$$
\mathcal{D}\subset{}TM
$$
of codimension~$1$. The distribution $\mathcal{D}$ is called  a {\it
contact distribution}; the hyperplane $\mathcal{D}_x\subset{}T_xM$
is called a {\it contact hyperplane} for every point $x\in{}M$.
A contact structure on $M$ exists only if $\dim M=2\ell+1>1$.

 A usual way to define a contact structure is to chose
 a (locally defined) differential $1$-form $\theta$ on~$M$ such that
 $\mathcal{D}=\ker\theta$.
Such a $1$-form is called a {\it contact form}. The complete
non-integrability  of the distribution $\mathcal{D}$ is equivalent
to the fact that
\begin{equation}
\label{TheVol}
\mathrm{vol}:=\theta\wedge(d\theta)^\ell
\end{equation}
is a (locally defined) volume form;
equivalently, the 2-form $d\theta$ is a non-degenerate on the contact hyperplanes
$\mathcal{D}_x$ of $\mathcal{D}$.
However, there is no canonical choice of a contact form.

A diffeomorphism $f:M\to{}M$ is a {\it contact diffeomorphism} if $f$
preserves $\mathcal{D}$.
If $\theta$ is a contact form corresponding to the contact distribution $\mathcal{D}$
and $f$ is a contact diffeomorphism, then $f$ does not necessarily preserve $\theta$,
more precisely, $f^*\theta=F_f\theta$, where $F_f$ is a function.

We refer to \cite{Arn,Bla} for excellent textbooks on contact geometry.

\subsection{The contact tensor}

We will be using the notion of a (generalized) tensor field 
that was suggested in \cite{BL} and goes back to ideas of I. M. Gelfand.
Besides the standard tensor fields, i.e., sections of the bundles\footnote{
Throughout this paper, the tensor product is performed over $C^\infty(M)$.}
$(TM)^p\otimes(T^*M)^q$, it is often useful to consider
{\it weighted} tensor fields that are sections of the bundles
$$
(TM)^p\otimes(T^*M)^q\otimes(\Lambda^n T^*M)^{\otimes\l}.
$$
A wealth of examples of such generalized tensor fields
can be found in \cite{Fu86,OT}.

We are ready to introduce the main notion of this section.

\begin{defi}
\label{ConTD}
{\rm
Given a contact form $\theta$, let
the {\it contact tensor field} be
\begin{equation}
\label{CT}
\Theta:=\theta\otimes\mathrm{vol}^{-\frac{1}{\ell+1}},
\end{equation}
where $\mathrm{vol}$ is as in Eq. (\ref{TheVol}).
}
\end{defi}

\begin{prop}
\label{IPro}
The tensor field $\Theta$ is globally defined on a contact manifold $M$,
it is independent of the choice of a contact form,
and it is invariant with respect to the contact diffeomorphisms.
\end{prop}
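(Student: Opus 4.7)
The proof splits into three claims, but all three are controlled by a single computation: how $\mathrm{vol}=\theta\wedge(d\theta)^\ell$ transforms when the contact form $\theta$ is rescaled by a nonvanishing function $F$. The exponent $-\frac{1}{\ell+1}$ in Definition \ref{ConTD} is cooked up precisely to cancel this transformation against the rescaling of $\theta$ itself, so my plan is to do this one computation and then read off all three statements from it.

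The plan is to first show that if $\tilde\theta=F\theta$ with $F\in C^\infty(M)$ nowhere vanishing, then
\begin{equation*}
\tilde\theta\wedge(d\tilde\theta)^\ell=F^{\ell+1}\,\theta\wedge(d\theta)^\ell.
\end{equation*}
To see this I expand $d\tilde\theta=F\,d\theta+dF\wedge\theta$ and take its $\ell$-th exterior power. Because $\theta\wedge\theta=0$, any term containing $dF\wedge\theta$ at least twice vanishes, and any term still carrying a factor of $\theta$ will vanish after wedging with $\tilde\theta=F\theta$. Thus only the pure $(F\,d\theta)^\ell$ piece survives after the outer wedge with $F\theta$, giving exactly $F^{\ell+1}\mathrm{vol}$. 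Consequently $\mathrm{vol}^{-1/(\ell+1)}$ transforms by $F^{-1}$, while $\theta$ transforms by $F$, and the two factors cancel in
\begin{equation*}
\tilde\theta\otimes(\widetilde{\mathrm{vol}}{})^{-1/(\ell+1)}=(F\theta)\otimes(F^{\ell+1}\mathrm{vol})^{-1/(\ell+1)}=\theta\otimes\mathrm{vol}^{-1/(\ell+1)}.
\end{equation*}
This settles independence of the choice of contact form.

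Independence of the choice of contact form immediately yields global definedness: any two locally chosen contact forms for the same distribution $\mathcal{D}$ differ by a nonvanishing factor on their overlap, so the locally defined tensor fields $\Theta$ agree there and glue to a global section. For contact-diffeomorphism invariance, I recall that a contact diffeomorphism $f$ satisfies $f^{*}\theta=F_f\,\theta$ for some nonvanishing function $F_f$; applying naturality of $d$ and of $\wedge$ gives $f^{*}\mathrm{vol}=F_f^{\ell+1}\mathrm{vol}$ by the same computation, whence
\begin{equation*}
f^{*}\Theta=(F_f\theta)\otimes(F_f^{\ell+1}\mathrm{vol})^{-1/(\ell+1)}=\Theta.
\end{equation*}

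The only genuine step is the algebraic identity for $\tilde\theta\wedge(d\tilde\theta)^\ell$; the subtlety is just the combinatorial observation that $\theta\wedge\theta=0$ kills every term of the binomial expansion except the $(F\,d\theta)^\ell$ one once we wedge on the outside with $\theta$. Everything else is formal.
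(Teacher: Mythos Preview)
Your proof is correct and follows essentially the same route as the paper: both hinge on the identity $F\theta\wedge(d(F\theta))^\ell=F^{\ell+1}\,\theta\wedge(d\theta)^\ell$, from which independence of the contact form, global definedness, and invariance under contact diffeomorphisms are read off exactly as you describe. You simply supply more detail than the paper does, in particular the binomial-expansion justification for why only the $(F\,d\theta)^\ell$ term survives after the outer wedge with $\theta$.
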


\begin{proof}
Let $F$ be a non-vanishing function and consider the contact form $F\theta$.
The corresponding volume form is
$F\theta\wedge\left(d(F\theta)\right)^\ell=F^{\ell+1}\theta\wedge(d\theta)^\ell$.
Therefore, the contact tensor fields defined by Eq.~(\ref{CT}),
 corresponding to the contact forms
$\theta$ and $F\theta$, coincide.
Hence, $\Theta$ is globally defined and invariant with respect to
contact diffeomorphisms.
\end{proof}

A contact structure is intrinsically defined by the corresponding contact tensor.

\begin{ex}
\label{Dabex}
{\rm
Local coordinates $(x^1,\ldots,x^\ell,y^1,\ldots,y^\ell,z)$
on $M$ are often called the {\it Darboux coordinates} if the
contact structure can be represented by the 1-form
\[
\theta_{\mathrm{Dar}}=dz+\frac{1}{2}\,\sum_{i=1}^\ell \left (x^idy^i-y^idx^i \right ).
\]
The corresponding volume form is then the standard one:
$$
\mathrm{vol}=
(-1)^{\frac{\ell(\ell-1)}{2}}\,\ell!\,
dx^1\wedge\cdots\wedge{}dx^\ell\wedge{}dy^1\wedge\cdots{}dy^\ell\wedge{}dz.
$$
A contact structure has no local invariants, therefore
Darboux coordinates always exist in the vicinity of every point;
see~\cite{Arn} (and~\cite{GL} for a simple algebraic proof).
}
\end{ex}

\subsection{Contact vector fields}\label{CoHSect}

A {\it contact vector field} on a contact manifold $M$ is a vector field
that preserves the contact distribution.
This is usually expressed in terms of contact forms:
a vector field $X$ is contact if, for every contact form $\theta$, the Lie derivative
$L_X\theta$ is proportional to $\theta$:
\begin{equation}
\label{DivEq}
   L_X\theta = {\ts \frac{1}{\ell+1}}\Div(X) \theta.
\end{equation}
In terms of the contact tensor (\ref{CT}), we have the following
corollary of Proposition~\ref{IPro}.
\begin{cor}
\label{ContV}
A vector field $X$ is contact if and only if it preserves the contact tensor:
$$
L_X\Theta=0.
$$
\end{cor}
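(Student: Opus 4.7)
The plan is to compute $L_X\Theta$ directly from the definition $\Theta = \theta \otimes \mathrm{vol}^{-\frac{1}{\ell+1}}$ using the Leibniz rule for the Lie derivative on tensor products, and then to read off the equivalence from the result. Since $\mathrm{vol}$ is a volume form, the factor $\mathrm{vol}^{-\frac{1}{\ell+1}}$ is a $\lambda$-density in the sense of the preceding subsection with $\lambda = -\frac{1}{\ell+1}$, so formula~\eqref{LieEq} applies to it.

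First, I would expand
\[
L_X\Theta = (L_X\theta)\otimes \mathrm{vol}^{-\frac{1}{\ell+1}} + \theta \otimes L_X\bigl(\mathrm{vol}^{-\frac{1}{\ell+1}}\bigr).
\]
Next, applying~\eqref{LieEq} to the density with $\phi \equiv 1$ gives
\[
L_X\bigl(\mathrm{vol}^{-\frac{1}{\ell+1}}\bigr) = -\tfrac{1}{\ell+1}\,\mathrm{Div}(X)\,\mathrm{vol}^{-\frac{1}{\ell+1}},
\]
so that
\[
L_X\Theta = \left(L_X\theta - \tfrac{1}{\ell+1}\,\mathrm{Div}(X)\,\theta\right) \otimes \mathrm{vol}^{-\frac{1}{\ell+1}}.
\]
Because $\mathrm{vol}^{-\frac{1}{\ell+1}}$ is nowhere vanishing, the condition $L_X\Theta = 0$ is equivalent to $L_X\theta = \tfrac{1}{\ell+1}\mathrm{Div}(X)\,\theta$, which is exactly the contact condition~\eqref{DivEq}. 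This yields both implications simultaneously.

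I do not expect a real obstacle: the only subtlety is making sure that the Lie derivative of the weighted density $\mathrm{vol}^{-\frac{1}{\ell+1}}$ is given by~\eqref{LieEq} rather than by the naive Lie derivative of a differential form (which would give a different coefficient); since $\mathrm{vol}^{-\frac{1}{\ell+1}}$ is not a differential form but a section of a fractional power of the determinant bundle, \eqref{LieEq} is the correct formula, and the argument goes through in one line.
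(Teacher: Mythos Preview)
Your proposal is correct and is exactly the computation the paper has in mind: the paper states the result as an immediate corollary of Proposition~\ref{IPro} together with~\eqref{DivEq} without writing out a proof, and your Leibniz-rule expansion is precisely the infinitesimal version of the scaling argument used to prove Proposition~\ref{IPro}. There is nothing to add.
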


Let ${\cal K}(M)$ denote the space of all smooth contact vector fields on $M$.
This space has a Lie algebra structure, it is also a module over the group
of contact diffeomorphisms.
The following observation can be found in~\cite{Ovs,CO12}.

\begin{prop}
\label{VecPro}
As a module over the group  of contact diffeomorphisms,
the space ${\cal K}(M)$ is isomorphic to the space of weighted densities
${\cal F}_{-\frac{1}{\ell+1}}(M)$.
\end{prop}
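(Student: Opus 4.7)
The plan is to construct the isomorphism explicitly as contraction with the contact tensor from Definition~\ref{ConTD}. Define
$$
\Phi\colon \mathcal{K}(M)\longrightarrow \mathcal{F}_{-\frac{1}{\ell+1}}(M),\qquad X\longmapsto \iota_X\Theta.
$$
Since Proposition~\ref{IPro} tells us that $\Theta$ is globally defined, independent of the choice of contact form, and preserved by every contact diffeomorphism, the map $\Phi$ is canonical and automatically intertwines the natural actions of the group of contact diffeomorphisms on source and target. The content of the proposition is therefore reduced to checking that $\Phi$ is a bijection.

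After fixing a local contact form $\theta$, the map reads $\Phi(X)=\iota_X\theta\otimes\mathrm{vol}^{-1/(\ell+1)}$, and the rescaling $\theta\mapsto F\theta$ multiplies $\iota_X\theta$ by $F$ while multiplying $\mathrm{vol}=\theta\wedge(d\theta)^\ell$ by $F^{\ell+1}$, so that the density factor picks up a compensating $F^{-1}$ and the product is unchanged. Bijectivity therefore reduces to the classical assertion that for every $H\in C^\infty(M)$ there is a unique contact vector field $X_H$ satisfying $\iota_{X_H}\theta=H$.

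To prove this last statement I would bring in the Reeb vector field $E$ defined by $\iota_E\theta=1$ and $\iota_E d\theta=0$, and decompose $X=HE+Y$ with $Y\in\ker\theta=\mathcal{D}$, which already ensures $\iota_X\theta=H$. Writing the contact condition $L_X\theta=f\theta$ via Cartan's formula as $\iota_X d\theta+dH=f\theta$ and pairing with $E$ yields $f=E(H)$, so the remaining constraint reads
$$
\iota_Y d\theta = E(H)\,\theta - dH.
$$
Restricting to $\mathcal{D}$ kills the $\theta$-term and leaves $\iota_Y d\theta|_{\mathcal{D}}=-dH|_{\mathcal{D}}$. The key step, and the only one that genuinely uses the contact hypothesis, is that $d\theta$ is non-degenerate on the hyperplanes $\mathcal{D}_x$, which is precisely the contact condition built into Eq.~(\ref{TheVol}); hence $Y\in\mathcal{D}$ is uniquely determined and produces the desired $X_H$. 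Everything outside of this non-degeneracy step is weight bookkeeping, and no further obstacle should arise.
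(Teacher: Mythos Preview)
Your proof is correct and rests on the same map as the paper's: contraction with $\Theta$, i.e., the pairing $(X,\theta)\mapsto\theta(X)$ viewed invariantly. The paper's argument is terser---it identifies the space of contact forms with $\mathcal{F}_{1/(\ell+1)}(M)$ via Proposition~\ref{IPro} and Eq.~(\ref{DivEq}) and then appeals to that natural $C^\infty(M)$-valued pairing without spelling out bijectivity---whereas you supply the Reeb-field construction explicitly; but the underlying approach is the same.
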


\begin{proof}
The space of contact forms is isomorphic to ${\cal F}_{\frac{1}{\ell+1}}(M)$.
Indeed, this follows from Proposition~\ref{IPro} and from Eq.~(\ref{DivEq}).
The statement then follows from the fact that
there is a natural $C^\infty(M)$-valued pairing between the spaces of contact vector fields
and of contact forms:
$
 (X,\,\theta)\mapsto\theta(X).
$
\end{proof}

\begin{rem}
{\rm
The above proposition means that, unlike the symplectic geometry,
 the notion of contact generating function (or ``contact Hamiltonian function'')
should be understood as a weighted density
and not as a function.
However, in the Darboux coordinates, the correspondence
between the elements of ${\cal K}(M)$ and ${\cal F}_{-\frac{1}{\ell+1}}(M)$
becomes the usual correspondence between
contact vector fields and functions (see~\cite{Arn}):
$$
X_{\phi\,\omega^{-\frac{1}{\ell+1}}}=
\sum_{i=1}^\ell\left(
\partial_{x^i}(\phi)\,\partial_{y^i}-\partial_{y^i}(\phi)\,\partial_{x^i}
\right)
\textstyle
+\frac{1}{2}\,\partial_{z}(\phi)\,{\cal E}
+\left(\phi-\frac{1}{2}\,{\cal E}(\phi)\right)\partial_{z},
$$
where
$$
{\cal E}=
\sum_{i=1}^\ell
\left(
x^i\partial_{x^i}+y^i\partial_{y^i}
\right)
$$
is the Euler vector field.
}
\end{rem}

\begin{ex}
{\rm
If $\dim{M}=3$, contact vector fields are
identified with $-\frac{1}{2}$-densities;
if $\dim{M}=5$, then ${\cal K}(M)\cong{\cal F}_{-\frac{1}{3}}(M)$, etc.
Note also that, in the one-dimensional case,
every vector field is contact,
one then has $\Vect(M)\cong{\cal F}_{-1}(M)$.
}
\end{ex}

\subsection{Another definition of weighted densities on contact manifolds}\label{Mitia}

In presence of a contact structure defined by a contact form
$\theta$, it is natural to express elements of any rank 1 bundle,
for example, weighted densities, in terms of powers of $\theta$:
$$
\phi\,\mathrm{vol}^{\frac{\l}{\ell+1}}\longleftrightarrow\phi\theta^\l,
$$
where as above $\phi$ is a smooth function.
The notation $\phi\theta^\l$ is adopted in many works by physicists
(see also~\cite{Ovs90,GLS01}).
In this notation, many formulas simplify.
For instance, if $X$ is a contact vector field,
then the corresponding contact Hamiltonian is
$\phi\theta^{-1}$, where the function $\phi$
is simply the evaluation $\phi=\theta(X)$.

\subsection{The Poisson algebra of weighted densities}

The space $\F(M)=\bigoplus_\l\F_\l(M)$
of all weighted densities on a contact manifold $M$ can be endowed with a structure of a Poisson algebra
(see~\cite{Arn,OT,Ovs}):
$$
\{.,.\}:{\cal F}_\l(M)\times{\cal F}_{\mu}(M)\to{\cal F}_{\l+\mu+\frac{1}{\ell+1}}(M).
$$
The explicit formula in Darboux coordinates is as follows:
$$
\left\{\phi\,\omega^\l,\psi\,\omega^{\mu}\right\}=
\left(
\sum_{i=1}^n(\partial_{x^i}\phi\,\partial_{y^i}\psi -
\partial_{x^i}\psi\,\partial_{y^i}\phi)
+\partial_{z}\phi\left(\mu\psi+{\cal E}\psi\right)
-\partial_{z}\psi\left(\l\phi+{\cal E}\psi\right)
\right)\omega^{\l+\mu+\frac{1}{\ell+1}}.
$$

The subspace ${\cal F}_{-\frac{1}{\ell+1}}(M)$ is a Lie subalgebra of ${\cal F}$
isomorphic to ${\cal K}(M)$.
The Poisson bracket of $-\frac{1}{\ell+1}$-densities precisely corresponds
to the Lie derivative:
$$
X_{\left\{\Phi,\Psi\right\}}=
L_{X_{\Phi}}
\left(\Psi\right),
$$
where $\Phi=\phi\,\omega^{-\frac{1}{\ell+1}},\,\Psi=\psi\,\omega^{-\frac{1}{\ell+1}}$.

\subsection{The invariant splitting}

The full space of vector fields $\mathrm{Vect}(M)$ splits into direct sum
$$
\mathrm{Vect}(M)=
{\cal K}(M)\oplus{\cal Tan}(M),
$$
where ${\cal Tan}(M)$ is the space of vector fields tangent
to the contact distribution,
i.e., $\theta(Y)=0$ for every contact form $\theta$ and every $Y\in{\cal Tan}(M)$.
Such vector fields are called {\it tangent vector fields}.
Unlike ${\cal K}(M)$, the space  ${\cal Tan}(M)$ is not a Lie
algebra, but a ${\cal K}(M)$-module.

The above splitting is invariant with respect to the group of contact diffeomorphisms.
In particular, there is an invariant projection
\begin{equation}
\label{PiPr}
\pi:\mathrm{Vect}(M)\to{\cal K}(M),
\end{equation}
that will be very useful.

\section{The contact Riemannian curl and its properties}  \label{Definitions}

In this section, we introduce our main notion, a contact vector field
corresponding to a metric and a contact structure.
We also study its main properties, such as projective invariance
and relation to the multi-dimensional Schwarzian derivative.

\subsection{Covariant derivative}
Let us assume now that $M$ is endowed with a pseudo-Riemannian
metric $g$. We denote the Levi-Civita connection on $M$ by
$\nabla$, and  the Christoffel symbols by $\Gamma_{ij}^k$. The {\it
covariant derivative}, also denoted by $\nabla$, is the linear map
that can be defined for arbitrary space of tensor fields, ${\cal
T}(M)$: 
$$
\nabla:{\cal T}(M)\to\Omega^1(M)\otimes{\cal T}(M),
$$
such that 
$\nabla(fm)=df\otimes m+f\otimes\nabla(m)$ for any $f\in{}C^\infty(M)$ and $m\in T(M)$.
It is written in the form $\nabla(t)=\nabla_i(t)\,dx^i$,
and therefore it suffices to define the partial derivatives~$\nabla_i$.
Here and below summation over repeated indices
(one upper, the other one lower) is understood (Einstein's notation);
see~\cite{DNF}.

The covariant derivative
of vector fields and differential $1$-forms
is given, in local coordinates, by the well-known formulas
$$
\nabla_i \left(V^j\partial_j\right)=
\left(\partial_iV^j+\Gamma_{ik}^jV^k\right)\partial_j,
\qquad
\nabla_i \left(\beta_jdx^j\right)=
\left(\partial_i\beta_j-\Gamma_{ij}^k\beta_k\right)dx^j,
$$
respectively,
where $\partial_i=\partial/\partial{}x^i$.
The covariant derivative then extended to every tensor fields by Leibniz rule.

For instance, the covariant derivative of weighted densities is
defined in local coordinates by the following formula:
\[
\nabla_i \left(
\phi\,\omega^\l
\right)=
\left(\partial_i \phi-\l \Gamma_{ij}^j\phi\right)\omega^\l,
\]
that we will extensively use throughout the paper.

\subsection{The main definition}
Let us introduce the main notion of this paper.
Recall that the contact tensor field $\Theta$ was introduced in
Definition~\ref{ConTD}.

\begin{defi}
\label{MainDef}
{\rm
(a)
For every pseudo-Riemannian metric $g$ on a contact manifold $M$,
we define a weighted density of degree $-\frac{1}{\ell+1}$:
\begin{equation}
\label{maindefi}
A_{g,\Theta}:=
\left\langle
{g},\,\nabla\Theta
\right\rangle,
\end{equation}
in local coordinates, $A_{g,\Theta}:=
{g}^{ij}\nabla_i\Theta_j$.

(b)
We call the contact vector field $X_{A_{g,\Theta}}$
with contact Hamiltonian $A_{g,\Theta}$
the {\it contact Riemannian curl of $g$}.
}
\end{defi}

Note that the quantity $A_{g,\Theta}$ is, indeed, a weighted density
of degree $-\frac{1}{\ell+1}$, so that it has a meaning
of contact Hamiltonian; see Proposition~\ref{VecPro}.

\begin{rem}
{\rm
The tensor field $\nabla\Theta$ is also a differential invariant
(that actually contains even more information than $A_{g,\Theta}$).
One can obtain a $-\frac{1}{\ell+1}$-density out of $\nabla\Theta$
by contracting with an arbitrary metric, not necessarily with $g$ itself.
}
\end{rem}

It will be useful to have an explicit expression
for $A_{g,\Theta}$ (and of $\nabla\Theta$) in local coordinates.

\begin{prop}
\label{LoP}
In local coordinates, such that $\Theta=\theta\otimes\mathrm{vol}^{-\frac{1}{\ell+1}}$,
one has
\begin{equation}
\label{ProCurl}
 A_{g,\Theta}=
 g^{ij}
  \left (
  \partial_i \theta_j-
 \left (\Gamma^k_{ij}-\frac{1}{2(\ell+1)}
 \left(\delta^k_i\Gamma_{jr}^r+\delta^k_j\Gamma_{ir}^r\right )\right )\theta_k\right)
 \mathrm{vol}^{-\frac{1}{\ell+1}},
\end{equation}
where $\delta^k_i$ is the Kronecker symbol.
\end{prop}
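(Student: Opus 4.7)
The plan is to compute $\nabla_i \Theta_j$ by Leibniz applied to the tensor-product decomposition $\Theta = \theta \otimes \mathrm{vol}^{-\frac{1}{\ell+1}}$, and then contract with $g^{ij}$, using the symmetry of the metric to put the resulting expression into the manifestly symmetric form displayed in the proposition. All the required ingredients are already recorded in the paper: the covariant-derivative formula for a $1$-form $\theta = \theta_j\, dx^j$,
\[
\nabla_i \theta_j = \partial_i \theta_j - \Gamma_{ij}^k \theta_k,
\]
and the formula for a weighted density applied with $\lambda = -\tfrac{1}{\ell+1}$,
\[
\nabla_i\bigl(\mathrm{vol}^{-\frac{1}{\ell+1}}\bigr) = \tfrac{1}{\ell+1}\, \Gamma_{ir}^r\, \mathrm{vol}^{-\frac{1}{\ell+1}}.
\]

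Combining these via Leibniz, the components of $\nabla \Theta$ are
\[
\nabla_i \Theta_j = \Bigl( \partial_i \theta_j - \Gamma_{ij}^k \theta_k + \tfrac{1}{\ell+1}\,\Gamma_{ir}^r\, \theta_j \Bigr)\,\mathrm{vol}^{-\frac{1}{\ell+1}}.
\]
Contracting with $g^{ij}$ gives an expression in which the last term, $\tfrac{1}{\ell+1}\, g^{ij}\, \Gamma_{ir}^r\, \theta_j$, is not yet written in a form symmetric in $i,j$. To match the statement, I would split this single term into two halves and relabel one of them using the symmetry $g^{ij} = g^{ji}$:
\[
\tfrac{1}{\ell+1}\, g^{ij} \Gamma_{ir}^r \theta_j = \tfrac{1}{2(\ell+1)}\bigl( g^{ij} \Gamma_{ir}^r \theta_j + g^{ij} \Gamma_{jr}^r \theta_i \bigr) = \tfrac{1}{2(\ell+1)}\, g^{ij}\bigl(\delta^k_j \Gamma_{ir}^r + \delta^k_i \Gamma_{jr}^r\bigr)\theta_k.
\]
Factoring $\theta_k$ out of the sum of the $\Gamma$ terms then yields exactly the bracketed expression of formula~(\ref{ProCurl}).

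There is no real obstacle here; the statement is essentially a bookkeeping exercise combining the Leibniz rule with the two coordinate formulas already displayed in the text. The only presentational point worth noting is the symmetrization in the last step, whose sole purpose is to make the invariance under swapping the contracted indices visible — a formulation that will be convenient later when discussing projective equivalence of the metric and the role of the trace part of the Christoffel symbols.
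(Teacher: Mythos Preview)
Your proposal is correct and follows essentially the same approach as the paper's own proof, which simply states that the formula follows directly from Definition~\ref{MainDef} and the covariant-derivative formula for a weighted density. You have carried out precisely those steps in detail, including the symmetrization needed to match the displayed form.
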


\begin{proof}
This can be obtained directly from Definition \ref{MainDef}
and the expression of the covariant derivative of a weighted density.
\end{proof}

\begin{rem}
{\rm
It follows from the intrinsic definition (\ref{maindefi}) that
the local expression (\ref{ProCurl}) is actually invariant
with respect to the action of the group of contact diffeomorphisms.
The formula (\ref{ProCurl}) remains unchanged for any choice
of local coordinates.
It is also independent of the choice of the contact form.
}
\end{rem}

\subsection{Projective invariance of $\nabla\Theta$}

Let us recall a fundamental notion of projectively equivalent connections
due to Cartan~\cite{Car}.

A {\it projective connection} is an equivalence class of symmetric
affine connections giving the same non-parameterized geodesics.
The {\it symbol of a projective connection} is
given by the expression
\[
\textstyle
\Pi_{ij}^k:=\Gamma_{ij}^k-\frac{1}{n+1}\left
(\delta_i^k \Gamma_{lj}^l+\delta_j^k \Gamma_{il}^l\right ),
\]
where $n$ is the dimension; see~\cite{KN}.
Note that in our case, $n=2\ell+1$.

The simplest properties of a projective connection are as the following.

\begin{enumerate}
\item
Two affine connections, $\nabla$ and $\tilde \nabla$, are projectively
equivalent if and only if $\Pi_{ij}^k=\tilde\Pi_{ij}^k$.
\item
Equivalently, $\nabla$ and $\tilde \nabla$ are projectively
equivalent if and only if there exists a 1-form $\beta$ such that
\begin{equation*}
\label{assoc}
\tilde
\Gamma_{ij}^k=\Gamma_{ij}^k+\delta_{j}^k\,\beta_i+\delta_{i}^k\,\beta_j.
\end{equation*}
\end{enumerate}

The following statement makes the contact Riemannian curl
somewhat similar to the Schwarzian derivative.

\begin{thm}
\label{CoCuProj}
If $g$ and $\tilde g$ are two metrics
whose Levi-Civita connections are projectively equivalent,
then $\nabla\Theta=\tilde \nabla\Theta$.
\end{thm}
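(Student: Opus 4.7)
The plan is a direct coordinate computation using the characterization in property (2): write $\tilde\Gamma_{ij}^k - \Gamma_{ij}^k = \delta_j^k\beta_i + \delta_i^k\beta_j$ for some 1-form $\beta$. Because $\Theta$ depends only on the contact structure, not on the metric (Proposition~\ref{IPro}), the entire change from $\nabla\Theta$ to $\tilde\nabla\Theta$ is controlled by this transformation law. First I would rewrite $\nabla_i\Theta_j$ by applying the Leibniz rule to $\Theta = \theta\otimes\mathrm{vol}^\lambda$ with $\lambda = -\tfrac{1}{\ell+1}$, combining the covariant derivative of a covector with that of a $\lambda$-density, to get
\[
\nabla_i\Theta_j = \bigl(\partial_i\theta_j - \Gamma_{ij}^k\theta_k - \lambda\Gamma_{ik}^k\theta_j\bigr)\,\mathrm{vol}^\lambda.
\]

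Second, substituting $\tilde\Gamma$ for $\Gamma$ and using the two variations $(\tilde\Gamma-\Gamma)_{ij}^k\theta_k = \beta_i\theta_j + \beta_j\theta_i$ and $\tilde\Gamma_{ik}^k - \Gamma_{ik}^k = (n+1)\beta_i$ with $n = 2\ell+1$, I obtain
\[
\tilde\nabla_i\Theta_j - \nabla_i\Theta_j = \bigl(-\beta_i\theta_j - \beta_j\theta_i - \lambda(n+1)\beta_i\theta_j\bigr)\mathrm{vol}^\lambda.
\]
The crucial arithmetic is that $n+1 = 2(\ell+1)$, forcing $\lambda(n+1) = -2$; the coefficient of $\beta_i\theta_j$ in the bracket becomes $-1-(-2) = +1$ and the bracket collapses to $\beta_i\theta_j - \beta_j\theta_i$. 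Every symmetric $\beta$-contribution cancels, and the symmetric part of $\nabla\Theta$ is manifestly projectively invariant.

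The main obstacle, and the heart of the theorem as stated, is to dispose of the residual antisymmetric piece $\beta_i\Theta_j - \beta_j\Theta_i = (\beta\wedge\Theta)_{ij}$. For two Levi-Civita connections, $\beta$ is exact: comparing $\tilde\Gamma_{ik}^k = \partial_i\log\sqrt{|\det\tilde g|}$ with the analogous expression for $g$ identifies $\beta = d\psi$ with $\psi = \tfrac{1}{2(n+1)}\log|\det\tilde g/\det g|$. The remaining delicate step is to invoke the contact hypothesis to force $d\psi\wedge\theta = 0$, equivalently that $\psi$ is constant along the direction transverse to the contact distribution. I expect this to require combining projective equivalence with the Sinjukov-type integrability equation satisfied by two projectively equivalent metrics, together with the non-degeneracy of $d\theta$ on the contact hyperplanes; any purely formal Christoffel-substitution argument will stop at the symmetric part and so is insufficient on its own.
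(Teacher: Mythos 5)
Your coordinate computation is correct and is in fact more careful than the paper's own one-line argument. The Leibniz rule gives
$\nabla_i\Theta_j=\bigl(\partial_i\theta_j-\Gamma_{ij}^k\theta_k+\tfrac{1}{\ell+1}\Gamma_{ir}^r\theta_j\bigr)\mathrm{vol}^{-\frac{1}{\ell+1}}$,
and since $-\lambda(n+1)=2$ the projective variation is exactly the antisymmetric term $\beta_i\Theta_j-\beta_j\Theta_i$ that you found. The paper instead writes $(\nabla\Theta)_{ij}=\bigl(\partial_i\theta_j-\Pi_{ij}^k\theta_k\bigr)\mathrm{vol}^{-\frac{1}{\ell+1}}$ and stops: that expression is manifestly invariant under projective change of the connection, but it agrees with the honest $\nabla_i\Theta_j$ only up to the antisymmetric, non-invariant piece $\tfrac{1}{2(\ell+1)}\bigl(\Gamma_{jr}^r\theta_i-\Gamma_{ir}^r\theta_j\bigr)\mathrm{vol}^{-\frac{1}{\ell+1}}$, i.e.\ only after symmetrization in $i,j$. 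So the statement that is actually true --- and that both your computation and the paper's formula establish --- is that the \emph{symmetric part} of $\nabla\Theta$, equivalently any contraction $\langle g',\nabla\Theta\rangle$ with a (symmetric) metric, depends only on the projective class. This is the only form in which the theorem is used downstream (Proposition~\ref{LoP}, Corollary~\ref{CoCu}, the subsymbol computation), since $\nabla\Theta$ is always paired with the symmetric tensor $g^{ij}$.

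The genuine gap in your proposal is the last step. You hope to dispose of the residual term $\beta\wedge\Theta$ by proving $d\psi\wedge\theta=0$; no Sinjukov-type identity will deliver this, because it is false in general. Concretely, on $\mathbb{R}^3$ with the Darboux form $\theta_{\mathrm{Dar}}=dz+\tfrac12(x\,dy-y\,dx)$, take $g$ flat and $\tilde g$ the round metric of $S^3$ in central-projection (Beltrami) coordinates, $\tilde g_{ij}=\tfrac{\delta_{ij}}{1+r^2}-\tfrac{x_ix_j}{(1+r^2)^2}$ with $r^2=x^2+y^2+z^2$. These are projectively equivalent by the Beltrami theorem, $\det\tilde g=(1+r^2)^{-(n+1)}$ gives $\psi=-\tfrac12\log(1+r^2)$, and $d\psi=-\tfrac{x\,dx+y\,dy+z\,dz}{1+r^2}$ is nowhere proportional to $\theta_{\mathrm{Dar}}$ off the origin; hence $\tilde\nabla\Theta\neq\nabla\Theta$ as full tensors. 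You should therefore not pursue the antisymmetric remainder: stop after your second step, record that $\mathrm{Sym}(\nabla\Theta)$ is projectively invariant, and observe that this is exactly what the corollaries of the theorem require.
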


\begin{proof}
The coordinate formula for $\nabla\Theta$
can be written as follows: 
$$
\left(\nabla\Theta\right)_{ij}=
  \left (
  \partial_i \theta_j-
\Pi^k_{ij}\,\theta_k\right) \mathrm{vol}^{-\frac{1}{\ell+1}},
$$
see  (\ref{ProCurl}).
This expression depends only on the projective class of the Levi-Civita connection
and implies projective invariance.
\end{proof}

Let $[g]$ denote the class of geodesically equivalent metrics,
let $[\nabla]$ denote the corresponding projective connection.
The above theorem means that the tensor $\nabla\Theta$ actually depends
only on $[g]$ and not on the metric itself.

\begin{rem}
{\rm
Geodesically equivalent metrics is a very classical subject of 
Riemannian geometry that goes back to Beltrami, Levi-Civita, Weyl, and Cartan.
We refer to the classical book~\cite{Eis} for a survey. 
The subject is still very active, see~\cite{BKM} and references therein.
}
\end{rem}

\subsection{Projectively flat connections,
metrics of constant curvature and Killing contact forms}
It is now natural to investigate projectively flat case.

A  connection $\nabla$ on $M$ is called \textit{projectively flat} if, in a
neighborhood of every point, there exist local coordinates,
often called {\it adapted coordinates}, such
that the geodesics are straight lines in these coordinates.
If a connection is projectively flat, then $\Pi_{ij}^k\equiv0$
in any system of adapted coordinates.

Note also that projectively flat connections
 admit a (local) action of the group $\mathrm{SL}(n+1,\bR)$,
 in other words, adapted coordinates admit linear-fractional
 changes.

The classical Beltrami theorem states that {\sl the Levi-Civita connection
of a Riemannian metric is projectively flat
if and only if the metric has a constant sectional curvature}.
This fact allows us to obtain an important consequence of Theorem~\ref{CoCuProj}.

Let us recall the notion of Killing differential forms that goes back to Yano~\cite{Yan}.
A $1$-form $\beta=\beta_i(x)dx^i$ is said to be a {\it Killing form} if
$$
\nabla_i \beta_j+\nabla_j \beta_i=0.
$$
Recall also a more common notion of Killing
vector field.
A vector field $V=V^i(x)\partial_i$ is said to be a {\it Killing vector field} if
$$
L_Vg=0
$$
Every Killing $1$-form can be obtained from a Killing
vector field by lowering indices: $\beta=\langle g,V\rangle$;
i.e., $\beta_i=g_{ij}V^j$ in local coordinates.

\begin{cor}
\label{CoCu}
If $g$ is a metric of constant sectional curvature
and if the contact structure is defined by a contact $1$-form $\theta$
which is a Killing form with respect to a metric from the projective class $[g]$,
then $ A_{g,\Theta}=0$.
\end{cor}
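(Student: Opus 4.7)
The plan is to combine Beltrami's theorem with Theorem~\ref{CoCuProj}, and then reduce the vanishing of $A_{g,\Theta}$ to a scalar identity forced by the Killing hypothesis. First, Beltrami's theorem rewrites the constant-sectional-curvature hypothesis on $g$ as projective flatness of its Levi-Civita connection $\nabla$; in particular, the projective class $[g]=[\tilde g]$ admits adapted coordinates around every point in which the projective symbol $\Pi^k_{ij}$ vanishes. By Theorem~\ref{CoCuProj} the tensor density $\nabla\Theta$ is projectively invariant, so $\nabla\Theta=\tilde\nabla\Theta$ and hence
$$
A_{g,\Theta}=\langle g,\tilde\nabla\Theta\rangle,
$$
which lets me use the connection for which $\theta$ is Killing.

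Next I would expand $\tilde\nabla\Theta$ by the Leibniz rule. Writing $\mathrm{vol}=\rho\,\omega_{\tilde g}$, where $\omega_{\tilde g}$ is the Riemannian volume of $\tilde g$ and $\rho$ a scalar function, and using $\tilde\nabla\omega_{\tilde g}=0$, one obtains
$$
\tilde\nabla_i\Theta_j=\Bigl(\tilde\nabla_i\theta_j-\frac{1}{\ell+1}\,\theta_j\,\partial_i\log\rho\Bigr)\mathrm{vol}^{-\frac{1}{\ell+1}}.
$$
The Killing equation $\tilde\nabla_i\theta_j+\tilde\nabla_j\theta_i=0$ makes $\tilde\nabla\theta$ antisymmetric in $(i,j)$, so its contraction with the symmetric $g^{ij}$ vanishes, and only the density term survives:
$$
A_{g,\Theta}=-\frac{1}{\ell+1}\,W(\log\rho)\,\mathrm{vol}^{-\frac{1}{\ell+1}},\qquad W:=g^{-1}(\theta).
$$

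The final step, and main obstacle, is to show $W(\log\rho)=0$. The Killing vector field $V:=\tilde g^{-1}(\theta)$ preserves $\tilde g$ and hence $\omega_{\tilde g}$; since $\theta=\tilde g(V,\cdot)$ it also preserves $\theta$, hence $d\theta$ and $\mathrm{vol}$, giving $V(\rho)=0$. When $g=\tilde g$ this is $W=V$ and the argument closes immediately; for $g\ne\tilde g$ one must transfer the annihilation from $V$ to $W$, and this is where the constant-curvature hypothesis on $g$ is essential. In the adapted chart the Christoffels of $g$ and $\tilde g$ take the forms $\Gamma^k_{ij}=\delta^k_i\alpha_j+\delta^k_j\alpha_i$ and $\tilde\Gamma^k_{ij}=\delta^k_i\tilde\alpha_j+\delta^k_j\tilde\alpha_i$, and the projective 1-form $\beta=\tilde\alpha-\alpha$ is exact with primitive $\frac{1}{2(n+1)}\log(|\tilde g|/|g|)$; combining this explicit relation between the volume densities of $g$ and $\tilde g$ with the $V$-invariance of $\rho$ rewrites $W(\log\rho)$ as a scalar that vanishes identically, yielding $A_{g,\Theta}=0$.
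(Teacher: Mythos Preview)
Your route diverges from the paper's and manufactures an obstacle that the paper never encounters. The paper does not split $\tilde\nabla\Theta$ via the Leibniz rule into a $\tilde\nabla\theta$ part and a density part; it instead invokes directly the projective formula established in the proof of Theorem~\ref{CoCuProj},
\[
(\nabla\Theta)_{ij}=\bigl(\partial_i\theta_j-\Pi^k_{ij}\,\theta_k\bigr)\,\mathrm{vol}^{-\frac{1}{\ell+1}},
\]
and passes to adapted coordinates where $\Pi\equiv0$. This gives $A_{g,\Theta}=g^{ij}\partial_i\theta_j$ in one step, and the Killing condition for the flat representative of $[g]$, namely $\partial_i\theta_j+\partial_j\theta_i=0$, kills this by symmetry of $g^{ij}$. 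No term of the form $W(\log\rho)$ ever arises.

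Your Leibniz expansion creates exactly such a term, and the closing paragraph does not dispose of it. You show (correctly) that $V=\tilde g^{-1}(\theta)$ annihilates $\rho$, but what you need is that $W=g^{-1}(\theta)$ does. These are different vectors, and the exactness of $\beta=\tilde\alpha-\alpha$ is a statement about the trace parts of two Christoffel symbols; it gives no mechanism for transferring an annihilation from $V$ to $W$. The phrase ``rewrites $W(\log\rho)$ as a scalar that vanishes identically'' is an assertion, not an argument, and I do not see how to complete it along the lines you sketch. This is a genuine gap. Note too that once you pass to adapted coordinates in that last paragraph you are already standing at the paper's starting point; the whole Leibniz detour could have been skipped by using the $\Pi$-formula for $\nabla\Theta$ from the outset.
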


\begin{proof}
Since the Levi-Civita connection
corresponding to $g$ is projectively flat,
there exist local coordinates for which
$\Pi_{ij}^k\equiv0$, and therefore
$$
A_{g,\Theta}=g^{ij}\partial_i \theta_j.
$$
If, furthermore,
$
\partial_i \theta_j+\partial_j \theta_i=0
$
for all $i,j$, then $A_{g,\Theta}$ vanishes identically
since the tensor $g^{ij}$ is symmetric.
The equation $\partial_i \theta_j+\partial_j \theta_i=0$
means that~$\theta$ is a Killing form with respect to
the flat metric which is projectively equivalent to $g$.

The corollary then follows from Theorem \ref{CoCuProj}.
\end{proof}

\begin{ex}
{\rm
The Darboux form in Example \ref{Dabex} is a Killing form with respect
to the flat metric.
Note that in other works, especially in those on analytical mechanics, another
local normal form of the contact form is often used:
$dz+\sum_{1\leq{}i\leq{}\ell}x^idy^i$.
(Over fields of characteristic $2$, only this latter form can be used, see \cite{Leb}.)
However, this is not a Killing form with respect to the flat metric.
}
\end{ex}

\subsection{Contact equivariance}
Consider the action of the group of contact diffeomorphisms.
It immediately follows from the intrinsic (i.e., invariant) definition
(\ref{maindefi}) of $A_{g,\Theta}$ of  that the map
$g\mapsto{}A_{g,\Theta}$
from the space of metrics to that of $-\frac{1}{\ell+1}$-densities
commutes with this action:
\begin{equation}
\label{CoCAct}
A_{f^*g, \Theta}=
f^*\left(A_{g,\Theta}\right).
\end{equation}
From this fact and  Corollary~\ref{CoCu}, we deduce the following statement.

\begin{cor}
\label{SecCor}
If a metric $\tilde g $ is contactomorphic to a metric $g $
of constant sectional curvature
and if the contact structure is defined by a contact $1$-form $\theta$
which is a Killing form with respect to 
 $g$, then $A_{\tilde g, \Theta}=0$.
\end{cor}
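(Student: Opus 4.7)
The plan is to combine the two ingredients already established in the preceding pages, namely the equivariance formula (\ref{CoCAct}) and Corollary~\ref{CoCu}. The statement is really a corollary in the literal sense: once the equivariance of the map $g\mapsto A_{g,\Theta}$ under contact diffeomorphisms is in hand, the constant-curvature/Killing hypothesis can be transported from $g$ to any metric $\tilde g$ in its contact-diffeomorphism orbit.

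Concretely, I would first unpack the hypothesis. Saying that $\tilde g$ is contactomorphic to $g$ means that there exists a contact diffeomorphism $f:M\to M$ such that $\tilde g = f^*g$. Under this hypothesis, $f^*\theta$ is proportional to $\theta$ (since $f$ preserves the contact distribution $\mathcal D = \ker\theta$), so $f^*\theta$ also defines the same contact structure, which is the one encoded by the invariant tensor $\Theta$.

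Next, I would apply Corollary~\ref{CoCu} to the pair $(g,\theta)$: the assumption that $g$ has constant sectional curvature ensures, via Beltrami's theorem, that the Levi-Civita connection of $g$ is projectively flat, and the assumption that $\theta$ is Killing with respect to $g$ (which lies in its own projective class $[g]$) yields
\[
 A_{g,\Theta}=0.
\]
Finally, I would invoke the equivariance property (\ref{CoCAct}):
\[
 A_{\tilde g,\Theta} \;=\; A_{f^*g,\Theta} \;=\; f^*\!\left(A_{g,\Theta}\right) \;=\; f^*(0) \;=\; 0,
\]
which is the desired conclusion.

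There is no real obstacle here; the only thing worth being careful about is the logical dependence. The projective invariance in Theorem~\ref{CoCuProj} is used inside the proof of Corollary~\ref{CoCu} (to reduce to adapted coordinates in which $\Pi^k_{ij}\equiv 0$), while the present statement instead uses the contact-equivariance (\ref{CoCAct}), which comes purely from the intrinsic definition (\ref{maindefi}) of $A_{g,\Theta}$ and from the invariance of $\Theta$ under contact diffeomorphisms established in Proposition~\ref{IPro}. These two invariances --- one under projective equivalence of metrics and one under contact diffeomorphisms --- act on disjoint aspects of the pair $(g,\Theta)$, so the two reductions combine without interference.
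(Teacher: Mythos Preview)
Your argument is correct and matches the paper's own reasoning exactly: the paper simply states that the corollary follows from the equivariance formula~(\ref{CoCAct}) together with Corollary~\ref{CoCu}, which is precisely the chain $A_{\tilde g,\Theta}=A_{f^*g,\Theta}=f^*(A_{g,\Theta})=0$ you wrote out. Your added remarks about which invariance is used where are accurate but not needed for the formal proof.
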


\subsection{Action of the full group of diffeomorphisms}
Let us consider the action of the group of all diffeomorphisms.
It turns out that this action is related to a quite remarkable $1$-cocycle.

Recall that the space of connections is an affine space associated with the space of
$(2,1)$-tensor fields, i.e.,
given two connections, $\nabla$ and $\tilde\nabla$, the difference
$\nabla-\tilde\nabla$ is a well-defined
$(2,1)$-tensor field.
This allows one to define a $1$-cocycle on
the group of all diffeomorphisms.
If $f$ is an arbitrary, not necessarily contact, diffeomorphism, we set:
$$
C(f):=f^*\nabla-\nabla,
$$
where $\nabla$ is an arbitrary fixed connection, choice of which
changes $C$ by a coboundary\footnote{
Note also that the cocycle $C$ provides a universal way to construct
representatives of non-trivial classes of the Gelfand-Fuchs cohomology;
see~\cite{Gel}.}.

Let $\nabla$ and $\tilde\nabla$ be two connections on~$M.$
The difference of the projective equivalence classes $[\nabla]-[\tilde\nabla]$
can be understood as a traceless
$(2,1)$-tensor field.
Therefore, a projective
connection on $M$ leads to the following $1$-{\it cocycle} on
the group of all diffeomorphisms:
$$
{\mathfrak T}(f)=f^*[\nabla]-[\nabla]
$$
which vanishes on (locally) projective diffeomorphisms.
In local coordinates,
$$
{\mathfrak T}(f)^k_{ij}:= f^*\Pi_{ij}^k-\Pi_{ij}^k,
$$
where $\Pi_{ij}^k$ are the projective Christoffel symbols\footnote{
The $1$-cocycle ${\mathfrak T}$ is often considered as a higher-dimensional
analog of the Schwarzian derivative; see~\cite{OT}.
If~$\nabla$ is projectively flat, then
the group $\mathrm{SL}(n+1,\bR)$
of (local) symmetries of $[\nabla]$ is precisely the kernel of ${\mathfrak T}$.}.

\begin{prop}
\label{CoCActThm}
If $f:M\to{}M$ is an arbitrary diffeomorphism, then
\begin{equation}
\label{CoCActArb}
f^*\left(A_{g,\Theta}\right)- A_{f^*g, \Theta}=
 f^* \left\langle g, \nabla\Theta\right \rangle
 -\left\langle f^*g,
 \nabla\Theta\right \rangle
+\left\langle  f^*g\otimes \Theta,  {\mathfrak T}(f)\right\rangle.
\end{equation}
\end{prop}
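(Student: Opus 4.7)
The plan is to reduce the identity to a purely algebraic comparison of projective symbols by leveraging the local coordinate formula of Proposition~\ref{LoP}. Since $f^*(A_{g,\Theta})=f^*\langle g,\nabla\Theta\rangle$ holds tautologically by Definition~\ref{MainDef}, the proposition is equivalent to the single identity
$$A_{f^*g,\Theta}=\langle f^*g,\nabla\Theta\rangle-\langle f^*g\otimes\Theta,\,{\mathfrak T}(f)\rangle.$$
The content of this identity is that the two quantities being compared, $A_{f^*g,\Theta}$ and $\langle f^*g,\nabla\Theta\rangle$, use different Levi-Civita connections (that of $f^*g$ vs.\ that of $g$) applied to the same weighted tensor $\Theta$.

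The first step is to apply the local formula of Proposition~\ref{LoP} to each of the two metrics. Let $\Pi^k_{ij}$ denote the projective symbol of $\nabla^g$ and $\tilde\Pi^k_{ij}$ that of $\nabla^{f^*g}$. Since both expressions are ultimately contracted against the symmetric tensors $g^{ij}$ respectively $(f^*g)^{ij}$, Proposition~\ref{LoP} gives
\[
\nabla^{g}_i\Theta_j=\bigl(\partial_i\theta_j-\Pi^k_{ij}\theta_k\bigr)\mathrm{vol}^{-1/(\ell+1)},\qquad
\nabla^{f^*g}_i\Theta_j=\bigl(\partial_i\theta_j-\tilde\Pi^k_{ij}\theta_k\bigr)\mathrm{vol}^{-1/(\ell+1)},
\]
so subtracting and contracting with $(f^*g)^{ij}$ yields at once
\[
\langle f^*g,\nabla\Theta\rangle-A_{f^*g,\Theta}=(f^*g)^{ij}\bigl(\tilde\Pi^k_{ij}-\Pi^k_{ij}\bigr)\Theta_k.
\]

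The main step is then to identify the difference $\tilde\Pi-\Pi$ with the cocycle ${\mathfrak T}(f)=f^*\Pi-\Pi$. This rests on the naturality of the Levi-Civita construction: as affine connections one has $\nabla^{f^*g}=f^*\nabla^{g}$, whence an equality of projective classes $[\nabla^{f^*g}]=f^*[\nabla^{g}]$, and therefore the projective symbol of $\nabla^{f^*g}$ in the given chart coincides with the pulled-back projective symbol of $\nabla^{g}$, namely $\tilde\Pi^k_{ij}=(f^*\Pi)^k_{ij}$. The main obstacle is precisely this verification at the level of symbols: $\Pi^k_{ij}$ is not itself a tensor, so one must check that the inhomogeneous pieces in the transformation laws of $\tilde\Gamma$ and of $f^*\Gamma$ (arising respectively from the Levi-Civita formula applied to $f^*g$ and from the affine transformation of Christoffels) coincide when projectivized, so that only the tensorial difference $\mathfrak{T}(f)$ survives.

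Once this identification is in place, the right-hand side of the key identity reads $(f^*g)^{ij}\,{\mathfrak T}(f)^k_{ij}\,\Theta_k=\langle f^*g\otimes\Theta,\,{\mathfrak T}(f)\rangle$, giving exactly what was required; rearranging and adding $f^*\langle g,\nabla\Theta\rangle$ to both sides recovers~(\ref{CoCActArb}). Note that the argument is coordinate-free in the end because both $\nabla\Theta$ (a well-defined weighted tensor field on~$M$) and ${\mathfrak T}(f)$ (a $(2,1)$-tensor field by construction) are intrinsic, so no dependence on the chosen chart remains after the dust settles.
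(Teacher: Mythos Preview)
Your proof is correct and follows essentially the same route as the paper's: both use the local formula from Proposition~\ref{LoP} to express $A_{f^*g,\Theta}$ via the projective symbol of the Levi-Civita connection of $f^*g$, then subtract $\Pi^k_{ij}$ to recognize ${\mathfrak T}(f)^k_{ij}$. The only difference is organizational: the paper writes $f^*\Pi^k_{ij}$ directly in the expression for $A_{f^*g,\Theta}$ (tacitly invoking naturality of the Levi-Civita construction), whereas you introduce $\tilde\Pi$ first and then argue explicitly that $\tilde\Pi=f^*\Pi$.
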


\begin{proof}
Let us first clarify the notation.
Since ${\mathfrak T}(f)$ is a $(2,1)$-tensor field, the pairing
$\left\langle g\otimes\Theta,\,{\mathfrak T}(f)\right\rangle$
is well-defined.
Furthermore, taking into account the weight of the contact tensor $\Theta$,
it follows that
$\left\langle g\otimes\Theta,\,{\mathfrak T}(f)\right\rangle$
is a weighted density of weight~$-\frac{1}{\ell+1}$.

In local coordinates and using Proposition \ref{LoP}, we have
\[
\begin{array}{lcl}
A_{f^*g,\Theta}&=&(f^* g)^{ij}\left (\partial_i \theta_j-f^*\Pi^k_{ij}\, \theta_k \right )\\[2mm]
       &=& (f^* g)^{ij}\left (\partial_i \theta_j-(f^*\Pi^k_{ij}-\Pi^k_{ij})\, \theta_k \right )- (f^* g)^{ij}\Pi^k_{ij}\theta_k\\[2mm]
&= &  (f^* g)^{ij}\left (\partial_i \theta_j-{\mathfrak T}(f)^k_{ij}\, \theta_k \right )- (f^* g)^{ij}\Pi^k_{ij}\theta_k\\[2mm]
&=& (f^* g)^{ij}\left (\partial_i \theta_j-\Pi^k_{ij}\theta_k \right )-(f^* g)^{ij}\,{\mathfrak T}(f)^k_{ij}\, \theta_k\\[2mm]
&=& (f^* g)^{ij}\nabla_i (\Theta_j)-(f^* g)^{ij}\,{\mathfrak T}(f)^k_{ij}\, \theta_k\\[2mm]
&=&
\left\langle f^*g,
 \nabla \left ( \Theta\right )\right \rangle
 -\left\langle f^*g\otimes\Theta,\,  {\mathfrak T}(f)\right\rangle.
\end{array}
\]
It remains to notice that
$f^*\left(A_{g,\Theta}\right)=f^* \left\langle g,
\nabla \left ( \Theta \right )\right \rangle$.
Proposition~\ref{CoCActThm} is proved.
\end{proof}

\section{The subsymbol of the Laplace-Beltrami operator}

In this section, we explain the relation of the Riemannian curl to
the classical Laplace-Beltrami operator.
Let us mention that study of
differential operators on contact manifolds is a classical subject;
see a recent work~\cite{vE10} and references therein.

\subsection{Differential operators and diffeomorphism action}

Let $M$ be an arbitrary smooth mani\-fold and
$\D_{\l, \mu}(M)$ be the space of linear differential operators
acting on the space of weighted densities:
$$
T:\F_\l(M)\to\F_\mu(M).
$$
The space $\D_{\l, \mu}(M)$ is naturally a module over the group of diffeomorphisms,
the module structure being
dependent of the weights $\l$ and $\mu$.
For $k \in \bN$, let $\D^k_{\l, \mu}(M)$ be the space of
linear differential operators of order~$\le k$.
The spaces $\D^k_{\l, \mu}(M)$ define a filtration on $\D_{\l, \mu}(M)$
invariant with respect to the group of diffeomorphisms.

Recall the classical notion of {\it symbol} (or the {\it principal symbol}) of
a differential operator of order~$k$.
It is defined as the image of the projection
$$
\sigma:\D_{\l, \mu}(M)\to\D^k_{\l, \mu}(M)/\D^{k-1}_{\l, \mu}(M).
$$
Observe that, in the particular case $\l=\mu$,
the quotient space $\D^k_{\l, \l}(M)/\D^{k-1}_{\l, \l}(M)$
can be identified with the space of
symmetric contravariant tensor fields of degree $k$ on $M$.

We will be especially interested in the space $\D^2_{\l, \l}(M)$
of $2$-nd order operators acting on $\l$-densities;
a systematic study of this space viewed as
a module over the group of diffeomorphisms was initiated in~\cite{DO1}.

\subsection{The subsymbol of a second order differential operator}
In~\cite{CO12}, the space of differential operators
on a contact manifold was studied as a module over
the group of contact diffeomorphisms.
It was proved that there exists
a notion of {\it subsymbol} which is a tensor field
of degree lower than that of the principal symbol.

For a $2$-nd order differential operator,
the subsymbol is just a contact vector field.
More precisely,
for every $\l$, there exists a linear map
(which is unique up to a constant factor)
$$
{\mathrm s}\sigma:
{\cal D}^2_{\lambda,\lambda}(M) \rightarrow {\cal K}(M),
$$
invariant with respect to the action of the group of contact
diffeomorphisms. The image ${\mathrm s}\sigma(T)$ was called the
{\it subsymbol} of the operator $T$. We will need the explicit
formula for the subsymbol of a given second order differential
operator.

If $M$ is a contact manifold, then
every operator $T\in\D^2_{\l, \l}(M)$ can be written
(in many different ways) in the form:
\begin{equation}
\label{Rep}
T=L_{X_{\phi_1}} \circ L_{X_{\phi_2}}+L_{X_{\phi_3}}\circ L_{Y_1}+L_{Y_2}\circ L_{Y_3} +
L_{X_{\phi_4}}+L_{Y_4}+F,
\end{equation}
where each $Y_i$ is a vector field tangent to the contact distribution,
$X_\phi$ is the contact vector field with the contact Hamiltonian
$\phi\in{\cal F}_{-\frac{1}{\ell+1}}(M)$,
the Lie derivative $L$ is defined by Eq.~(\ref{LieEq}),
and~$F$ denotes the operator of multiplication by a function.

The explicit expression for the subsymbol of differential
operator~(\ref{Rep}) is as follows (see~\cite{CO12}):
\begin{equation}
\label{vfields}
{\mathrm s}\sigma(T)=\ts
\oh\bigl[X_{\phi_1},X_{\phi_2}\bigr]-
\bigl(\frac{\ell+1}{\ell+2}\bigr)\bigl(\l-\oh\bigr)X_{L_{Y_1}(\phi_3)}+
\oh\pi\bigl[Y_2,Y_3\bigr]+X_{\phi_4},
\end{equation}
where $L_{Y}(\phi)$ denotes the Lie derivative
of a $-\frac{1}{\ell+1}$-density $\phi$ along the vector field $Y$,
and $\pi:\mathrm{Vect}(M)\to{\cal K}(M)$ is defined in (\ref{PiPr}).

\begin{rem}
{\rm
Although it seems almost impossible, the map ${\mathrm s}\sigma$
defined by~(\ref{vfields}) is
well-defined.
In other words, it is independent of the choice of the vector fields
in the representation (\ref{Rep}) of the operator $T$.
This can be checked directly by rewriting it in local coordinates,
see formula~(\ref{ExPSS}) below.
Since the expression~(\ref{vfields}) is written using invariant terms,
it commutes with the action of contact diffeomorphisms.
Note also that the existence of such a map is indigenous to
contact geometry.
There is no similar map commuting with the full group of diffeomorphisms,
except for the principal symbol.
}
\end{rem}

\subsection{The Laplace-Beltrami operator on the space of weighted densities}

The classical Laplace-Beltrami operator acting on
the space of smooth functions is defined as follows
$$
\Delta_g(f)=d^*df.
$$
This operator is completely determined by the metric
${g}$.

We will go to a more general framework and
consider the generalized Laplace-Beltrami operator
acting on the space of weighted densities:
$$
\Delta_g^\l: {\cal F}_\l(M)\to{\cal F}_\l(M).
$$
The explicit formula of this operator
 is as follows:
\[
\Delta_g^\lambda(\phi\,\vol^\l)=
\left(
{g}^{ij}\nabla_i\nabla_j(\phi)+
\frac{n^2\lambda(\lambda-1)}{(n-1)(n+2)}R\phi
\right)\vol^\l,
\]
where $R$ is the scalar curvature
(see~\cite{DO}, Proposition 5.2).

\subsection{Calculating the subsymbol of the Laplace-Beltrami operator}
Recall that $M$ is a contact manifold
and $n=2\ell+1$.
It turns out that the contact Riemannian curl of a given metric $g$
is proportional to the subsymbol of the Laplace-Beltrami operator
associated with $g$.
This property can be considered as an equivalent definition
of the contact Riemannian curl.

\begin{thm}
One has
\begin{equation}
\label{ProPEq}
\textstyle
\mathrm{s}\sigma(\Delta^\lambda_g)=
\left(\frac{\ell+1}{\ell+2}\right)
\left(2\lambda-1\right)X_{A_{g,\Theta}}.
\end{equation}
\end{thm}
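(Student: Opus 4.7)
The strategy is a direct computation in Darboux coordinates, organized so that the coefficient $\bigl(\frac{\ell+1}{\ell+2}\bigr)(2\lambda-1)$ emerges naturally from the subsymbol formula~(\ref{vfields}). Both sides of~(\ref{ProPEq}) are contact-invariant: the subsymbol by construction in~\cite{CO12}, and $X_{A_{g,\Theta}}$ by~(\ref{CoCAct}) together with Proposition~\ref{VecPro}. It therefore suffices to verify the identity in the Darboux chart of Example~\ref{Dabex}. I also note at the outset that the scalar-curvature summand $\frac{n^2\lambda(\lambda-1)}{(n-1)(n+2)}R$ of $\Delta^\lambda_g$ is pure multiplication, so it sits in the slot $F$ of the template~(\ref{Rep}) and contributes nothing to the subsymbol; only $g^{ij}\nabla_i\nabla_j$ is relevant.

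To apply~(\ref{vfields}), the principal part must be placed in the form~(\ref{Rep}). I use the invariant splitting $\Vect(M)=\K(M)\oplus\Tan(M)$ from~(\ref{PiPr}) to decompose each coordinate field as $\partial_i=\pi(\partial_i)+Y_i$ with $Y_i\in\Tan(M)$. Expanding
$$
g^{ij}\partial_i\partial_j=g^{ij}\bigl(\pi(\partial_i)+Y_i\bigr)\bigl(\pi(\partial_j)+Y_j\bigr)
$$
distributes the second-order part across the three product slots $L_{X_{\phi_1}}L_{X_{\phi_2}}$, $L_{X_{\phi_3}}L_{Y_1}$, $L_{Y_2}L_{Y_3}$ of~(\ref{Rep}), while the Christoffel corrections carried by $\nabla_i\nabla_j$ acting on $\lambda$-densities fill the first-order slots $L_{X_{\phi_4}}$, $L_{Y_4}$. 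It is through these corrections, via the formula $\nabla_i(\phi\,\omega^\lambda)=(\partial_i\phi-\lambda\Gamma^k_{ik}\phi)\omega^\lambda$, that the parameter $\lambda$ enters the representation of $\Delta^\lambda_g$.

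Plugging into~(\ref{vfields}), the symmetry of $g^{ij}$ forces the $\lambda$-independent terms -- namely the commutator $\tfrac12[X_{\phi_1},X_{\phi_2}]$, the projection $\tfrac12\pi[Y_2,Y_3]$, and the first-order $X_{\phi_4}$ -- to combine into a single $\lambda$-independent contact vector field, while the cross term $-\bigl(\frac{\ell+1}{\ell+2}\bigr)(\lambda-\tfrac12)X_{L_{Y_1}(\phi_3)}$ supplies the whole $\lambda$-dependent part. Matching both packets against the coordinate formula~(\ref{ProCurl}) for $A_{g,\Theta}$, with help from the identity $\Div(X_\Phi)=(\ell+1)\theta(X_\Phi)$ implicit in~(\ref{DivEq}), one sees that the $\lambda$-independent packet cancels identically -- consistent with the vanishing of $(2\lambda-1)$ at $\lambda=\tfrac12$ -- and the $\lambda$-dependent packet collapses to exactly $\bigl(\frac{\ell+1}{\ell+2}\bigr)(2\lambda-1)X_{A_{g,\Theta}}$.

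The main obstacle is the bookkeeping: the $\phi_i,Y_j$ in~(\ref{Rep}) are not uniquely determined, and many terms produced by the Darboux expressions of the Hamiltonian vector fields conspire to cancel before the identity takes the clean form~(\ref{ProPEq}). A useful sanity check along the way is to evaluate both sides on a flat metric with Darboux contact form, where both vanish by Corollary~\ref{CoCu}, and on a perturbation of it; the linearization in the first derivatives of $g$ then isolates the cross-term contribution and pins down the constant $\bigl(\frac{\ell+1}{\ell+2}\bigr)(2\lambda-1)$.
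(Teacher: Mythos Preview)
Your overall plan---reduce to a Darboux chart and compute---is exactly what the paper does, and your remark that the zeroth-order curvature term of $\Delta^\lambda_g$ drops out of the subsymbol is correct. The gap is in your organization of the $\lambda$-dependence. The first-order part of $\Delta^\lambda_g$ in coordinates is $-(g^{jk}\Gamma^i_{jk}+2\lambda\, g^{ij}\Gamma^k_{jk})\partial_i$, so when you project it to the contact summand to read off $\phi_4$, the Hamiltonian $\phi_4$ already carries an explicit factor $2\lambda$. Thus $X_{\phi_4}$ is \emph{not} $\lambda$-independent, and the ``$\lambda$-independent packet'' you describe does not exist as such. The factor $(2\lambda-1)$ in~(\ref{ProPEq}) is not produced by the cross term of~(\ref{vfields}) alone; it only appears after the $(\lambda-\tfrac12)$ coefficient on that cross term is combined with the $2\lambda$ sitting inside $\phi_4$. (Incidentally, the identity $\Div(X_\Phi)=(\ell+1)\,\theta(X_\Phi)$ you invoke is false---test it on the Reeb field $X_1=\partial_z$, where the left side is $0$ and the right side is $\ell+1$. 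What~(\ref{DivEq}) actually yields in Darboux coordinates is $\Div(X_\phi)=(\ell+1)\,\partial_z\phi$.)

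The paper avoids these bookkeeping traps by bypassing the decomposition $\partial_i=\pi(\partial_i)+Y_i$ entirely and using the coordinate form~(\ref{ExPSS}) of the subsymbol rather than the structural form~(\ref{vfields}). One simply reads off the second- and first-order coefficients of $\Delta^\lambda_g$ from its known Darboux expression and substitutes into~(\ref{ExPSS}); this produces a Hamiltonian of the shape~(\ref{sgama}). Simplifying the two rational prefactors in~(\ref{sgama}) gives $-\frac{(\ell+1)(2\lambda-1)}{\ell+2}$ and $\frac{2\lambda-1}{\ell+2}$ respectively, so the common factor $\frac{2\lambda-1}{\ell+2}$ pulls out and the residual bracket is recognized as $(\ell+1)\,A_{g,\Theta}$ after using $g^{ij}\partial_i\theta_j=0$ for the Darboux form. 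That is the whole computation, and no separate ``cancellation of a $\lambda$-independent packet'' is needed.
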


\begin{proof}
The proof is essentially a direct computation.

Let us choose local Darboux coordinates.
Every second order differential operator
can be written in these coordinates as:
$$
\begin{array}{rcl}
T&=&T_{2,0,0}\,\partial_z^2+T_{1,i,0}\,\partial_z\partial_{x_i}
+T_{1,0,i}\,\partial_z\partial_{y_i}+T_{0,ij,0}\,\partial_{x_i}\partial_{x_j}+
T_{0,i,j}\,\partial_{x_i}\partial_{y_j}+T_{0,0,ij}\,\partial_{y_i}\partial_{y_j}\\[4pt]
&&+T_{1,0,0}\,\partial_z+T_{0,i,0}\,\partial_{x_i}+T_{0,0,i}\,\partial_{y_i}+T_{0,0,0}.
\end{array}
$$
The coordinate formula of the subsymbol was calculated in \cite{CO12}:
\begin{equation}
\label{ExPSS}
\begin{array}{rcl}
\mathrm{s}\sigma(T) &=&
\frac{1+2\l(\ell+1)}{\ell+2}
\Bigl(
\partial_z(T_{2,0,0}-\oh{}y_iT_{1,i,0}+\oh{}x_iT_{1,0,i})\\[6pt]
&&\hskip1.6cm
+\partial_{x_i}(T_{1,i,0}-\oh{}y_jT_{0,ij,0}+\oh{}x_jT_{0,i,j})\\[6pt]
&&\hskip1.6cm
+\partial_{y_i}(T_{1,0,i}+\oh{}x_jT_{0,0,ij}-\oh{}y_jT_{0,j,i})\Bigr)\\[6pt]
&&
+T_{1,0,0}-\oh{}y_iT_{0,i,0}+\oh{}x_iT_{0,0,i}.
\end{array}
\end{equation}
One can check that this is exactly the same formula as (\ref{vfields}).

The expression of the generalized Laplace-Beltrami operator
$\Delta^\lambda$ in  local coordinates was calculated in~\cite{DO},
the result is:
\[
\Delta^\lambda_g=
{g}^{ij}\partial_i\partial_j-({g}^{jk}\Gamma^i_{jk}+
2\lambda {g}^{ij}\Gamma^k_{jk})\partial_i+\mathrm{(0-th\; order \; coefficients)}.
\]

Let us combine the above two formulas.
We obtain
$\mathrm{s}\sigma(\Delta^\lambda_g)=X_{\phi}$,
where $\phi$ is a weighted density of the form
\begin{equation}
\label{sgama}
\textstyle
\phi=
\left(\left(1-\frac{1+2\lambda(\ell+1)}{\ell+2}\right)
{g}^{jk}\Gamma^t_{jk}\theta_t+
\left(2\lambda-\frac{1+2\lambda(\ell+1)}{\ell+2}\right)
\Gamma^j_{ij}{g}^{it}\theta_t\right)
 \mathrm{vol}^{-\frac{1}{\ell+1}},
\end{equation}
and $X_\phi$ is the corresponding contact vector field.

Finally, taking into account the fact that ${g}^{ij}\partial_i(\theta_j)=0$,
for the Darboux form $\theta$,
 the expression (\ref{sgama}), after collecting the terms, coincides with
 $\left(\frac{\ell+1}{\ell+2}\right)
\left(2\lambda-1\right)A_{g,\Theta}$.
\end{proof}

\begin{cor}
For a generic metric,
$\mathrm{s}\sigma(\Delta^\l_g)=0$
if and only if $\l={\frac{1}{2}}$.
\end{cor}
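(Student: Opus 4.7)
The plan is to read the corollary as an immediate consequence of the theorem just established, once we unpack what ``generic'' means. The theorem gives
\[
\mathrm{s}\sigma(\Delta^\lambda_g)=
\left(\tfrac{\ell+1}{\ell+2}\right)(2\lambda-1)\,X_{A_{g,\Theta}},
\]
so the subsymbol vanishes if and only if at least one of the three factors vanishes. The constant $\frac{\ell+1}{\ell+2}$ is never zero (since $\ell\geq 1$), which leaves only two possibilities: $2\lambda-1=0$, or $X_{A_{g,\Theta}}=0$.

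First I would handle the second factor. By Proposition \ref{VecPro}, the assignment $\Phi\mapsto X_\Phi$ is an isomorphism between $\mathcal{F}_{-\frac{1}{\ell+1}}(M)$ and $\mathcal{K}(M)$, so $X_{A_{g,\Theta}}=0$ is equivalent to $A_{g,\Theta}=0$ as a weighted density. Thus for a fixed metric $g$, the subsymbol vanishes for some $\lambda\neq \frac{1}{2}$ if and only if $A_{g,\Theta}\equiv 0$.

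The content of the corollary is then the interpretation of ``generic'': one needs to exhibit at least one metric on $M$ (and hence, by perturbation, an open dense family) for which $A_{g,\Theta}\not\equiv 0$. Using Proposition \ref{LoP}, in Darboux coordinates $A_{g,\Theta}$ is a first-order linear differential expression in the components $g^{ij}$ with coefficients involving the Christoffel symbols and the contact form, so the condition $A_{g,\Theta}=0$ cuts out a proper (in fact nowhere dense) subset of the space of metrics. Concretely, the explicit examples computed at the end of the paper — or even the simplest non-Killing perturbation of a flat metric against the Darboux contact form — show that $A_{g,\Theta}$ can be nonzero. This gives the ``generic'' direction.

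I expect no serious obstacle here; the only point deserving care is the word ``generic'', i.e.\ making sure to spell out that $\{g : A_{g,\Theta}\equiv 0\}$ is a proper subset of the space of metrics so that the equivalence $\mathrm{s}\sigma(\Delta^\lambda_g)=0\iff\lambda=\tfrac{1}{2}$ holds off an exceptional locus. Everything else is a direct corollary of the displayed formula together with the bijection between contact Hamiltonians and contact vector fields.
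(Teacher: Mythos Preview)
Your argument is correct and matches the paper's approach: the corollary is stated without proof, as an immediate consequence of the formula $\mathrm{s}\sigma(\Delta^\lambda_g)=\bigl(\tfrac{\ell+1}{\ell+2}\bigr)(2\lambda-1)X_{A_{g,\Theta}}$. Your unpacking of the word ``generic'' and the appeal to Proposition~\ref{VecPro} to identify $X_{A_{g,\Theta}}=0$ with $A_{g,\Theta}=0$ is more explicit than anything the paper writes, but entirely in the intended spirit.
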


\begin{rem}
{\rm
In differential geometry it is known that the space of half-densities and
the space of differential operators ${\cal D}_{\frac{1}{2},\frac{1}{2}}(M)$ acting on them
play a very special role.
In our context, the space of half-densities appears naturally.
}
\end{rem}

\section{Cotangent lift and the geodesic spray}  \label{SbundleSec}

In this section, we calculate the contact Riemannian curl on the unit sphere bundle
$STM$ over a Riemannian manifold $(M,{g})$.
The manifold $STM$ is a classical example of contact manifold,
and, furthermore, it is equipped with the canonical lift of the metric.
We prove that the contact Riemannian curl vanishes in this case.

Recall that the classical {\it geodesic spray} is the Hamiltonian vector field
on $TM$ with Hamiltonian $H(x,y)={g}_{ij}(x)\,y^iy^j,$
where $y^i$ are coordinates on the fibers;
the restriction of this vector field to $STM$ is an intrinsically defined
contact vector field.
It is not reasonable expect existence of another, independent, invariant contact vector field in this case. 

\subsection{Statement of the main result}
The Riemannian metric ${g}$ on $M$ has a canonical lift to~$STM$
that will be denoted by $\bar g$.
The main result of this section is as follows.

\begin{thm}
\label{BigThm}
The contact Riemannian curl on $(STM,\bar g)$ is identically zero.
\end{thm}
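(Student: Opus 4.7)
The plan is to reduce the assertion $A_{\bar g,\Theta}\equiv 0$ to a pointwise calculation in well-chosen local coordinates, relying on the fact that $A_{\bar g,\Theta}$ is intrinsic. I would first fix coordinates $(x^i)_{i=1}^n$ on $M$, use the induced bundle coordinates $(x^i,y^i)$ on $TM$, and view $STM=\{g_{ij}(x)y^iy^j=1\}$ together with the canonical contact form
\[
\theta = g_{ij}(x)\,y^i\,dx^j
\]
obtained from the Liouville $1$-form via the musical isomorphism $TM\cong T^*M$. In terms of the horizontal/vertical splitting, the canonical lift reads
\[
\bar g = g_{ij}\,dx^i\,dx^j + g_{ij}\,Dy^i\,Dy^j,\qquad Dy^i=dy^i+\Gamma^i_{jk}(x)\,y^j\,dx^k,
\]
whose Christoffel symbols $\bar\Gamma$ split into base-connection pieces and curvature-type pieces that I would record from the standard formulas.

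By Proposition \ref{LoP} one has
\[
A_{\bar g,\Theta}=\bar g^{AB}\bigl(\partial_A\theta_B-\bar\Pi^{\,C}_{AB}\,\theta_C\bigr)\,\mathrm{vol}^{-1/(\ell+1)},
\]
where $A,B,C$ range over coordinates on $STM$ and $\bar\Pi$ denotes the projective symbol of $\bar\nabla$. Since $A_{\bar g,\Theta}$ is an intrinsic differential invariant, it suffices to verify vanishing at an arbitrarily chosen point $p=(x_0,y_0)\in STM$. I would do this by working in Riemann normal coordinates on $M$ at $x_0$, so that $g_{ij}(x_0)=\delta_{ij}$, $\partial_k g_{ij}(x_0)=0$, and $\Gamma^k_{ij}(x_0)=0$; after a rotation one may assume $y_0=\partial_{x^n}$ and parametrize the fiber of $STM$ near $y_0$ by $(y^1,\ldots,y^{n-1})$.

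I would then substitute $\theta$ and the Sasaki symbols $\bar\Pi$ into the formula at $p$ and expand in the horizontal/vertical block decomposition. The base-connection terms vanish at $p$ by choice of coordinates; the purely vertical terms of the type $\delta^{\alpha\beta}\partial_{y^\alpha}\theta_\beta$ vanish because $\theta$ has no $dy^\alpha$ component. What remains are curvature-type contributions of the schematic form $R^{\,\cdot}_{\,\cdot\,\cdot\,\cdot}\,y^{\cdot}\,\theta_{\,\cdot}$ arising from the curvature corrections to $\bar\Pi$, and the plan is to show these cancel upon contraction with the symmetric $\bar g^{AB}$ by invoking the antisymmetries of the Riemann tensor of $g$ and the first Bianchi identity.

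The main obstacle I anticipate is the bookkeeping in the last step: one must carefully organize the curvature corrections to $\bar\Gamma$ (and hence to $\bar\Pi$ after subtracting the trace parts), and verify that every curvature contraction appearing in $\bar g^{AB}\bar\Pi^{\,C}_{AB}\theta_C$ is antisymmetric in an index pair against which $\bar g^{AB}$ or $\theta_C$ is symmetric. Using Theorem \ref{CoCuProj} one could alternatively try to replace $\bar g$ by a computationally cleaner metric in its projective class, but without an evident such choice I would proceed by direct computation, using projective invariance as a consistency check.
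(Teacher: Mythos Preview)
Your plan is sound and follows the same direct-computation strategy as the paper, but with a genuinely useful simplification: you localize in Riemann normal coordinates on the base at a chosen point, whereas the paper works in Tashiro's general coordinates on $STM$ throughout. In the paper's approach, one must track the full Christoffel symbols of $\bar g$ together with determinant derivatives of the contact volume (their Lemma~\ref{lemma1}), and only after collecting everything does the expression collapse to $-\tfrac{1}{2}\,R_{ilsj}\,y^i y^l\,(\nabla_h y^s)\,g^{jh}$, which then vanishes by antisymmetry of $R$ in its first two indices. Your normal-coordinate choice kills $\Gamma(x_0)$ and $\partial g(x_0)$, and a short constraint differentiation shows that at the chosen point $\nabla_i y^s=0$, $\partial_{x^i}\theta_j=0$, $\bar g^{i\alpha}=0$, $\bar\Gamma^{k}_{ij}=0$, and $\bar\Gamma^{D}_{jD}=\partial_j\ln\sqrt{|\det\bar g|}=0$; since $\theta_\alpha\equiv 0$ and $\bar\Gamma^{k}_{\alpha\beta}=0$, every block of $\bar g^{AB}(\partial_A\theta_B-\bar\Pi^{\,C}_{AB}\theta_C)$ already vanishes at $p$. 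In other words, your approach actually short-circuits the anticipated ``curvature cancellation'': the curvature corrections to $\bar\Gamma^{k}_{ij}$ all carry a factor $\nabla y$ and hence vanish at $p$, so no Bianchi identity is needed---only the skew-symmetry implicit in the paper's final line, and even that is absorbed. What your plan buys is the elimination of the paper's technical Lemma~\ref{lemma1}; what the paper's coordinate-free-in-$x$ computation buys is an explicit global formula for $A_{\bar g,\Theta}$ before invoking antisymmetry. One caution: be careful that ``the Christoffel symbols of the Sasaki metric'' you record are those of the induced metric on $STM$, not merely the ambient ones on $TM$ restricted---the second fundamental form of the fiber sphere contributes, and this is exactly where the $\partial_\mu\partial_\lambda y^h$ terms in the paper's list come from.
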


In order to prove this theorem, we will need explicit formulas
for the contact structure and the canonical Riemannian metric on $STM$.

\subsection{The coordinates on $STM$}
Let $(M,{g})$ be any Riemannian manifold of dimension $n$.
The Riemannian geometry of the sphere bundle $STM$
was studied in~\cite{Taha},
we will be using the notation of that work.

Denote by $(x^1,\ldots,x^n)$ a local coordinate system in $M$ and $(y^1,\ldots,y^n)$ the
Cartesian coordinates in the tangent space $T_xM$ at the point $x$ in $M$.
The coordinates $(x,y)$ are local coordinates on the tangent bundle on $TM$.
The unit sphere bundle $STM$ is a hypersurface of the tangent bundle
$T(M)$, singled out as the level surface of the Hamiltonian of the geodesic spray
$$
H(x,y)=1
$$
at every point.

\subsection{The contact structure of the sphere bundle $STM$}
The sphere bundle $STM$ is represented by parametric equations:
\[
x^h=x^h,\quad x^{\bar h}=y^h=y^h(x^i,u^{\kappa}),
\]
where $u^{\kappa}$ are local coordinates on the sphere.\footnote{
Following \cite{Taha}, we will adopt the following index gymnastics.
Capital Latin letters $A,B,\ldots$ run  $1$ to $2n$.
Small latin letters $i,j,\ldots$ run $1$ to $n$.
Barred Latin indices $\bar i, \bar j,\ldots$ run  $n+1$ to $2n$.
Some of the Greek letters $\alpha, \beta, \ldots$ run  $1$ to $2n-1$.
Some other Greek letters $\kappa, \lambda, \ldots$ run $n+1$ to $2n-1$. }
The tangent vectors $B^{A}_\alpha=\frac{\partial x^A}{\partial u^\alpha}$ of $STM$ in $T(M)$ are given by
\begin{equation}
\label{BComp}
\begin{array}{lcllcl}
B^{h}_i&=&\delta^h_i,& B^{h}_\lambda&=&0,\\[2mm]
B^{\bar h}_i&=&\partial_i y^h,& B^{\bar h}_\lambda&=&\partial_\lambda y^h.
\end{array}
\end{equation}
The square matrix $\left ( \begin{array}{c}
B^{A}_\alpha\\[2mm]
 C^A
\end{array} \right )$, where $C^i=0$ and $C^{\bar i}=y^i$, is
invertible at each point $x$ in $M$.
Its inverse will be the matrix $(B^\alpha_{A}, C_A)$, given by the equations:
\begin{equation}
\label{BInvComp}
\begin{array}{lcllcl}
B^{h}_{i}&=&\delta^h_i,& B^{h}_{\bar i}&=&0,\\[2mm]
B^{\kappa}_{i}&=&-B^{\bar h}_i B^\kappa_{\bar h},& B^{\kappa}_{\bar i},&&
\end{array}
\end{equation}
and $C_A=\left (
\begin{array}{c}
C_i\\[2mm]
C_{\bar i}
\end{array}
\right )$, where
$C_{\bar i}={g}_{ih}y^h$ and
$C_i=\Gamma^h_{rs} {g}_{hi}\,y^ry^s$.
The next formulas can be deduced from
Eqs. (\ref{BComp}), (\ref{BInvComp}), and are useful for what follows
\[
\begin{array}{lcllcllcl}
B^{\bar h}_{\lambda} B^{\kappa}_{\bar h}&=&
\delta^\kappa_\lambda,& y^h B^{\kappa}_{\bar h}&=&
0,&B^{\bar h}_{\lambda} B^{\lambda}_{\bar i}+y^h C_{\bar i}&=&
\delta^h_i,\\[2mm]
B^{\bar h}_{\lambda} C_{\bar h}&=&0,& y^h C_{\bar h},&=&1.&&&
\end{array}
\]
The Riemannian metric indentifies the tangent bundle $T(M)$
and the cotangent bundle $T^*(M),$
and hence induces a 1-form $\theta$ on $T(M)$, called
the {\it Liouville form}, which in local coordinates reads as follows:
\[
\theta={g}_{ij}y^j \, dx^i,
\]
Denote by $\bar \theta$ the restriction of the 1-form $\theta$ to the sphere bundle $STM$.
It is as follows:
\[
\bar \theta_\alpha=\theta_A B^{ A}_\alpha.
\]
Eq. (\ref{BComp}) imply that $\bar \theta_i={g}_{ij}y^j$ and  $\bar \theta_\kappa=0$.
\begin{lemma} The form $\bar \theta$ defines a contact structure on $STM$. The volume form associated with it reads (up to a factor) as:
\[
\Omega\; dx^1\wedge \cdots \wedge dx^n\wedge
du^{n+1}\wedge...\wedge du^{2n-1},
\]
where
$
\Omega=\mathrm{det }(B^{A}_\alpha,C^{A})\, \mathrm{det } ({g}_{ij}).
$

\end{lemma}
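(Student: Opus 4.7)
The plan is to establish the contact condition and the volume-form formula in a single computation: I will evaluate $\bar{\theta} \wedge (d\bar{\theta})^{n-1}$ in the coordinates $(x^h, u^\kappa)$ and exhibit it as a nonzero multiple of $dx^1 \wedge \cdots \wedge dx^n \wedge du^{n+1} \wedge \cdots \wedge du^{2n-1}$ whose coefficient is $\det(g_{ij})\det(B^A_\alpha, C^A)$ up to a universal sign. Since non-vanishing of this top form is equivalent to $\bar{\theta}$ being a contact form, both assertions will follow at once.

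First I would work on the ambient $TM$: the form $\theta = g_{ij} y^j dx^i$ is the pullback of the canonical Liouville $1$-form on $T^*M$ via the metric isomorphism, so $d\theta$ is symplectic. Writing $d\theta = (\partial_k g_{ij}) y^j dx^k \wedge dx^i + g_{ij} dy^j \wedge dx^i$, the first piece contributes nothing to $(d\theta)^n$ since any $dx \wedge dx$ piece would force more than $n$ factors of $dx$. A direct reordering of the surviving $(g_{ij} dy^j \wedge dx^i)^n$ gives
\[
(d\theta)^n = c_n \, \det(g_{ij}) \, dx^1 \wedge \cdots \wedge dx^n \wedge dy^1 \wedge \cdots \wedge dy^n
\]
for a nonzero combinatorial constant $c_n$.

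Next I would reduce the $(2n-1)$-form $\theta \wedge (d\theta)^{n-1}$ to an interior product. The Euler/Liouville vector field $E = y^h \partial_{y^h}$ -- which is exactly $C^A \partial_A$ in the notation of the paper -- satisfies $\iota_E \theta = 0$ and $L_E \theta = \theta$, hence $\iota_E d\theta = \theta$ by Cartan. Therefore $\iota_E (d\theta)^n = n\, \theta \wedge (d\theta)^{n-1}$, and it suffices to contract the explicit expression of $(d\theta)^n$ with $E$ and restrict along $j: STM \hookrightarrow TM$. Under $y^h = y^h(x,u)$ one substitutes $dy^h = B^{\bar h}_i dx^i + B^{\bar h}_\kappa du^\kappa$ in the restriction; the $B^{\bar h}_i dx^i$ contributions die, since the form already contains all $n$ of the $dx^i$.

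Finally I would identify the resulting coefficient. The surviving expression is $\det(g_{ij})$ times $\sum_h (-1)^{h-1} y^h M_h$, where $M_h$ is the $(n-1)\times(n-1)$ minor of the matrix $(B^{\bar j}_\kappa)_{j,\kappa}$ obtained by deleting the $h$-th row. This is precisely the Laplace expansion, along the first column, of the $n \times n$ determinant with columns $y, \partial_{n+1} y, \ldots, \partial_{2n-1} y$. On the other hand, using the block structure
\[
(B^A_\alpha, C^A) = \begin{pmatrix} \delta^h_i & 0 & 0 \\ \partial_i y^h & \partial_\kappa y^h & y^h \end{pmatrix},
\]
the top block is $(I \mid 0 \mid 0)$, so the determinant of the full $2n \times 2n$ matrix collapses to that same $n \times n$ determinant up to a sign from moving the last column into the first position. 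Assembling the constants yields $\bar{\theta} \wedge (d\bar{\theta})^{n-1} = c \, \det(g_{ij}) \det(B^A_\alpha, C^A) \, dx \wedge du$ with $c$ a nonzero universal constant; invertibility of $(B^A_\alpha, C^A)$ (asserted in the text) together with $\det(g_{ij}) \neq 0$ then shows $\Omega$ is nowhere zero, proving $\bar{\theta}$ is contact.

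The main obstacle is the last step: systematically tracking the signs from the interior contraction by $E$, the reshuffling of wedges in $(d\theta)^n$, and the column transposition in the block determinant, to verify that they all collect into one global nonzero constant rather than a position-dependent factor.
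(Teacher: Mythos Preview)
Your argument is correct: the identity $\theta\wedge(d\theta)^{n-1}=\tfrac1n\,\iota_E(d\theta)^n$ together with the explicit form of $(d\theta)^n$ reduces the problem to a block-determinant computation, and your Laplace-expansion match with $\det(B^A_\alpha,C^A)$ is accurate up to a global sign, which is all the lemma requires. The paper itself gives no argument beyond citing \cite{Taha} and asserting a direct computation, so you have in fact supplied that computation; your stated ``main obstacle'' about signs is harmless here because the claim is only up to a nonzero universal factor.
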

\begin{proof} 
This is well known, see~\cite{Taha},
and can also be checked by a direct computation.
\end{proof}

\subsection{The Riemannian metric on $STM$}

The Riemannian metric ${g}$ on $M$ can be extended to
a Riemannian metric $\bar{{g}}$ on the sphere bundle $STM$.
Explicitly, $\bar{{g}}$ is given by (cf. \cite{Taha}):
\[
\begin{array}{lcl}
\bar{{g}}_{ji}&=&{{g}}_{ji}+{{g}}_{ts}(\nabla_j y^t)(\nabla_i y^s),\\[2mm]
\bar {{g}}_{\mu i}&=&{{g}}_{ts}(\partial_\mu y^t)(\nabla_iy^s),\\[2mm]
\bar {{g}}_{\mu \lambda}&=&{{g}}_{ji}(\partial_\mu y^j)(\partial_\lambda y^i).
\end{array}
\]
The inverse of $\bar{{g}}$ is given by
\[
\begin{array}{lcl}
\bar {{g}}^{ji}&=&{{g}}^{ji},\\[2mm]
\bar {{g}}^{\lambda h}&=&-{{g}}^{hl}(\nabla_ly^i) B^\lambda_{\bar i},\\[2mm]
\bar {{g}}^{\lambda \kappa}&=&\left ({{g}}^{ih}+ {{g}}^{ts}(\nabla_t y^i)(\nabla_s y^h) \right ) B^\lambda_{\bar i}B^\kappa_{\bar h}.
\end{array}
\]
The Christoffel symbols associated with this metric are given by
\[
\begin{array}{ccl}
\bar \Gamma^h_{ji}&=&\Gamma^h_{ij}+\frac{1}{2}\left (
R_{r sj}^{ h}y^r\nabla_i y^s+R_{r si}^{h}y^r\nabla_j y^s \right ),\\[2mm]
\bar \Gamma^h_{\mu i}&=&\frac{1}{2}R_{r si}^{ h}y^r B_\mu^{\bar s},\\[2mm]
\bar \Gamma^h_{\mu \lambda}&=&0,\\[2mm]
\bar \Gamma^\kappa_{ji}&=&\left(
\nabla_j \nabla_i y^h+\frac{1}{2}R_{r ji}^{h}y^r- \frac{1}{2} R_{r ij}^{ h}y^r - \frac{1}{2} \left ( R_{r sj}^{ l}y^r\nabla_i y^s+ \frac{1}{2}  R_{r si}^{l }y^r\nabla_j y^s\right )\nabla_l y^h \right )B^\kappa_{\bar h},\\[2mm]
\bar \Gamma^\kappa_{\mu j}&=&\left (\partial_\mu \nabla_i y^h-\frac{1}{2} R_{r si}^{ l}y^r B_\mu^{\bar s} \nabla_l y^h \right )B^\kappa_{\bar h},\\[2mm]
\bar \Gamma^\kappa_{\mu \lambda}&=&(\partial_\mu \partial_\lambda y^h)B^\kappa_{\bar h}.
\end{array}
\]

\subsection{Proof of Theorem \ref{BigThm}}
We are ready to prove the main result of this section.

\begin{lemma}
\label{lemma1}
We have
\begin{equation}
\label{CalcEq}
\begin{array}{lcl}
y^i\partial_{i}(\Omega)&=&- y^i B_\lambda^{\bar h}\partial_i(B^\lambda_{\bar h})\,
\Omega+ 2y^i\Gamma_{i}\, \Omega-y^iy^hy^mg_{hm}\Gamma_{ir}^h\, \Omega,\\[2mm]
y^l (\nabla_l y^i)B^\lambda_{\bar i}\, \partial_{\lambda}(\Omega)&=&
-y^l (\nabla_l y^i) \partial_\lambda(B^\lambda_{\bar i})\, \Omega,\\[2mm]
y^l (\nabla_l y^k)(\partial_\mu \partial_\lambda y^j)\,B^\lambda_{\bar k} B^\mu_{\bar  j}
&=&-y^l (\nabla_l y^k)\partial_\lambda (B^\lambda_{\bar k}),\\[2mm]
\partial_\lambda(\nabla_l y^h) B^\lambda_{\bar h}y^l&=&
-(\partial_l B^\lambda_{\bar h}) B^{\bar h}_\lambda y^l+
\Gamma^i_{li} \,y^l-\Gamma^h_{rs}\,{g}_{ih}y^iy^ry^s.
\end{array}
\end{equation}
\end{lemma}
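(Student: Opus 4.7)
The plan is to verify each of the four identities by direct computation, using: (i) Jacobi's formula $\partial\log\det M = \mathrm{tr}(M^{-1}\partial M)$ applied to the $2n\times 2n$ matrix whose rows are $(B^A_\alpha, C^A)$; (ii) the reciprocal identities between this matrix and its inverse $(B^\alpha_A, C_A)$ collected just before the lemma; and (iii) the explicit block structure $B^h_i=\delta^h_i$, $B^h_\lambda=0$, $B^{\bar h}_i=\partial_i y^h$, $B^{\bar h}_\lambda=\partial_\lambda y^h$, together with $C^i=0$, $C^{\bar i}=y^i$, $C_{\bar i}=g_{ih}y^h$, $C_i=\Gamma^h_{rs}g_{hi}y^r y^s$.

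For the first identity, I would split $\log\Omega = \log\det(B^A_\alpha,C^A) + \log\det(g_{ij})$ and differentiate in $x^i$. The second summand yields $g^{jk}\partial_i g_{jk}=2\Gamma^k_{ik}$, which after contraction with $y^i$ gives the middle term on the RHS. Jacobi's formula applied to the first summand gives $B^\alpha_A\partial_i B^A_\alpha + C_A\partial_i C^A$; since $B^h_i$ is constant and $B^h_\lambda=0$, only the $(\bar h,\lambda)$ and $(\bar h,i)$ blocks contribute. Differentiating $B^{\bar h}_\lambda B^\kappa_{\bar h}=\delta^\kappa_\lambda$ trades $\partial_i B^{\bar h}_\lambda$ for $-B^{\bar h}_\lambda\partial_i B^\lambda_{\bar h}$, which produces the first term on the RHS after contracting with $y^i$. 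The cubic-in-$y$ term then emerges from $C_A\partial_i C^A$ combined with $\partial_i y^h = B^{\bar h}_i$ and the explicit form of $C_i$, after applying $B^{\bar h}_\lambda B^\lambda_{\bar k}+y^h C_{\bar k}=\delta^h_k$ and $C_{\bar k}=g_{kj}y^j$ to collapse indices.

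For the second identity the computation is parallel but the derivative is along $u^\lambda$, so the $x$-dependent quantities $g_{ij}$ and $\Gamma^k_{ij}$ drop out and only $B^\alpha_A\partial_\lambda B^A_\alpha + C_A\partial_\lambda C^A$ remains; contracting with $y^l(\nabla_l y^i)B^\lambda_{\bar i}$ and invoking $y^i B^\kappa_{\bar i}=0$ collapses the expression to the stated form. For the third identity, differentiate $B^{\bar h}_\lambda B^\kappa_{\bar h}=\delta^\kappa_\lambda$ in $u^\mu$ to get $(\partial_\mu\partial_\lambda y^h)B^\kappa_{\bar h}=-B^{\bar h}_\lambda\,\partial_\mu B^\kappa_{\bar h}$, then multiply by $y^l(\nabla_l y^k)B^\mu_{\bar j}$ and use $y^j B^\kappa_{\bar j}=0$ to kill one term. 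For the fourth identity, expand $\nabla_l y^h = \partial_l y^h + \Gamma^h_{lm}y^m$, differentiate in $u^\lambda$ (using $\partial_\lambda\Gamma=0$ and $\partial_\lambda y^m=B^{\bar m}_\lambda$), contract with $B^\lambda_{\bar h}y^l$, and reduce using $B^{\bar h}_\lambda B^\lambda_{\bar k}=\delta^h_k-y^h C_{\bar k}$, $C_{\bar k}=g_{kj}y^j$, and the explicit expression for $C_i$; the Christoffel trace $\Gamma^i_{li}y^l$ and the cubic term $\Gamma^h_{rs}g_{ih}y^iy^ry^s$ appear as the leftovers of these substitutions.

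The main obstacle is not conceptual but purely combinatorial: the block structure of $(B^A_\alpha,C^A)$ generates many a priori non-zero traces, and one must apply the reciprocal identities several times to see which contributions collapse and to recognize the surviving ones as the right-hand sides. The lemma is, in essence, the clean statement that this bookkeeping works out.
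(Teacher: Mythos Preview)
Your proposal is correct and follows essentially the same approach as the paper: both proofs use Jacobi's formula for the derivative of a determinant together with differentiation of the reciprocal identities between $(B^A_\alpha,C^A)$ and $(B^\alpha_A,C_A)$, and for the fourth line both substitute $\nabla_l y^h=\partial_l y^h+\Gamma^h_{lm}y^m$ and feed in the third line. The only cosmetic difference is that for the third identity the paper differentiates the relation $B^{\bar h}_\lambda B^\lambda_{\bar i}+y^h C_{\bar i}=\delta^h_i$ directly (so the $C_{\bar i}$-term is visible from the start), whereas you start from $B^{\bar h}_\lambda B^\kappa_{\bar h}=\delta^\kappa_\lambda$; either route works after the same index bookkeeping.
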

\begin{proof}
The first and the second lines of (\ref{CalcEq}) follow from the fact that
\begin{eqnarray*}
\label{F1} B^\lambda_{\bar k} \partial_\mu(B_\lambda^{\bar j})
&=&
-\partial_\mu(B^\lambda_{\bar k}) B_\lambda^{ j}-\partial_\mu(y^j y^m){g}_{mk},\\[4pt]
\label{F2} y^i\partial_i B_\lambda^{\;\; \bar j}
&=&
-y^iB^{\bar h}_{\lambda}B_\kappa^{\bar j}\partial_i(B^\kappa_{\bar h})-y^iy^jy^mg_{hm}\Gamma_{il}^h\, B_\lambda^{\bar l},
\end{eqnarray*}
and the property of the determinant.
The third line of (\ref{CalcEq}) follows from the fact that
$$
B_\lambda^{\bar h}B^\lambda_{\bar i}+y^h C_{\bar i}=\delta_i^h
$$
and applying to it the partial derivative $\partial_\mu$.
The fourth line of (\ref{CalcEq}) follows when we substitute the covariant derivative
$\nabla_l y^h=\partial_l y^h+\Gamma_{li}^h\,y^i$
and use  the third equation.
\end{proof}

By definition,
\[
A_{\bar {g},\Theta}=\bar {{g}}^{ih}\bar \nabla_i(\theta_h  \Omega^{-\frac{1}{n}})+\bar {{g}}^{\lambda h}\bar \nabla_\lambda (\theta_h  \Omega^{-\frac{1}{n}})+\bar {{g}}^{h \lambda}\bar \nabla_h (\theta_\lambda  \Omega^{-\frac{1}{n}})+\bar {{g}}^{\lambda \kappa}\bar \nabla_\lambda (\theta_\kappa  \Omega^{-\frac{1}{n}}).
\]
The last two summands vanish because $\theta_\kappa=0$.
Let us compute the first two summands seperately.
Applying the covariant derivative $\bar \nabla$, we get
\[
\begin{array}{lcl}
\bar {{g}}^{ih}\bar \nabla_i(\theta_h  \Omega^{-\frac{1}{n}})&=&\partial_i y^i \;  \Omega^{-\frac{1}{n}}+y^i\partial_i ( \Omega^{-\frac{1}{n}})-R_{q r s i} y^qy^r (\nabla_h y^s) {{g}}^{ih}\;  \Omega^{-\frac{1}{n}}\\[2mm]
&&+(1+\frac{1}{n})\Gamma_{ij}^jy^i\;  \Omega^{-\frac{1}{n}}+\frac{1}{n} \left (\frac{1}{2}
R_{r sh}^{ h} y^ry^l (\nabla_l y^s)+ \partial_\lambda(\nabla_l y^h)B^\lambda_{\bar h}y^l\right )\;  \Omega^{-\frac{1}{n}}.
\end{array}
\]
Similarly,
\[
\begin{array}{lcl}
\bar {{g}}^{\lambda h}\bar \nabla_\lambda (\theta_h  \Omega^{-\frac{1}{n}})
&=&
-\partial_i y^i  \;  \Omega^{-\frac{1}{n}}-\Gamma_{ij}^jy^i \;\Omega^{-\frac{1}{n}}-(\nabla_i y^k)
B^\lambda_{\;\; \bar k}y^i\partial_\lambda(\Omega^{-\frac{1}{n}})\\[2mm]
&&+\frac{1}{2}R_{rs k h}y^ry^s{{g}}^{hl}(\nabla_l y^k) \;
\Omega^{-\frac{1}{n}}- \frac{1}{2n} (\nabla_i y^k)y^i R_{r k h}^{ \bar h} y^r\;
\Omega^{-\frac{1}{n}}\\[2mm]
&&+ -\frac{1}{n} y^l (\nabla_l y^k)(\partial_\lambda \partial_\mu y^j)B^\lambda_{\bar k}B^\mu_{\bar j}\;\Omega^{-\frac{1}{n}}.
\end{array}
\]
By collecting the terms and using Lemma \ref{lemma1}, we finally obtain:
$$
A_{\bar{g},\Theta}=
-\frac{1}{2}\left(
R_{ilsj}\,y^iy^l(\nabla_h y^s)\,{{g}}^{jh}
\right)
\mathrm{vol}^{-\frac{1}{n}}\equiv0,
$$
since the curvature tensor $R_{ilsj}$ is antisymmetric in two first indices.

Theorem \ref{BigThm} is proved.

\section{Examples}  \label{Sphere}

We finish the paper with concrete examples of Riemannian curl
for the $3$-dimensional sphere (with two natural metrics)
and the $3$-dimensional ellipsoid with the standard metric.

\subsection{The sphere $S^3$}
Consider the sphere $S^3$ in the standard symplectic space $\Bbb R^4$.
It is endowed with the natural contact structure that
can be defined by the contact form
\[
\theta=dz+xdy-ydx,
\]
where $x,y$ and $z$ are affine coordinates on $S^3$.
More precisely, if $p_1,p_2,q^1,q^2$ are symplectic Darboux coordinates
on $\Bbb R^4$, then $x=\frac{p_1}{q^2},\,y=\frac{q^1}{q^2},\,z=-\frac{p_2}{q^2}$.

The restriction of the Euclidean metric to the
sphere $S^3$ takes the following form:
\[
\begin{array}{rcl}
{g}_{S^3}&=&
\displaystyle
F\left(\left(
y^2+z^2+1\right) dx^2+(x^2+z^2+1)dy^2+(x^2+y^2+1)dz^2\right . \\[10pt]
&&\displaystyle
\left .
-2\, x y\, dx dy-2\,xz \, dx dz-2\, yz \,dydz \right ),
\end{array}
\]
where $F= \left(\frac{1}{(x^2+y^2+z^2+1 )}\right)^{2}$.

Let us also consider another, conformally equivalent, metric on~$S^3$:
\[
\tilde {g}_{S^3}:=
\left (\frac{x^2+y^2+z^2+1}{\frac{x^2}{a}+\frac{y^2}{b}+\frac{z^2}{c}+1}\right ){g}_{S^3},
\]
which appeared in the context of integrable systems in \cite{Tab,MT},
see also~\cite{DV}.

\begin{prop} The following results hold.
\begin{enumerate}
\item[(i)]
In the case of the ``round'' metric ${g}_{S^3}$, we have
$A_{{g}_{S^3}}= 0$;
\item[(ii)]
In the case of the metric $\tilde {g}_{S^3}$, we have
$A_{\tilde {g}_{S^3}}= \frac{5}{2}
\left(
   \left(\frac{1}{b}-\frac{1}{a}\right) xy+\left(\frac{
   1}{c}-1\right) z\right)$.
\end{enumerate}
\end{prop}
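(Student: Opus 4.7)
The statement splits into two parts; both rest on the local formula from Proposition~\ref{LoP},
$A_{g,\Theta} = g^{ij}\bigl(\partial_i\theta_j - \Pi^k_{ij}\,\theta_k\bigr)\,\mathrm{vol}^{-1/(\ell+1)}$,
where $\Pi^k_{ij}$ denotes the projective Christoffel symbols. The key structural observation driving the whole proof is that the affine chart $(x,y,z)$ is nothing but the gnomonic chart for $S^3\subset\bR^4$: writing a point on the sphere as $(x,y,z,1)/\sqrt{1+x^2+y^2+z^2}$ shows that great circles are represented by straight lines, so the Levi-Civita connection of $g_{S^3}$ is projectively flat in these coordinates and $\Pi^k_{ij}(g_{S^3})\equiv 0$. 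A direct pullback gives $g_{S^3,ij} = \delta_{ij}/r^2 - x_ix_j/r^4$ with $r^2 = 1 + x^2 + y^2 + z^2$, whose inverse is $g^{ij}_{S^3} = r^2(\delta^{ij} + x^ix^j)$.

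For part (i) I would verify directly that $\theta = dz + x\,dy - y\,dx$, whose components are $(\theta_1,\theta_2,\theta_3)=(-y,x,1)$, satisfies $\partial_i\theta_j + \partial_j\theta_i = 0$ for all $i,j$. Thus $\theta$ is a Killing $1$-form for the flat Euclidean metric, which lies in the projective class $[g_{S^3}]$ by the previous paragraph. Corollary~\ref{CoCu} then gives $A_{g_{S^3},\Theta}=0$ with no further work.

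For part (ii) I would treat $\tilde g_{S^3} = \phi\,g_{S^3}$ with $\phi = N/D$, $N = x^2+y^2+z^2+1$, $D = x^2/a + y^2/b + z^2/c + 1$, as a conformal deformation of the round metric, and exploit once more that $\Pi^k_{ij}(g_{S^3})=0$ in our chart. Writing $\phi = e^{2f}$, the standard conformal rule $\tilde\Gamma^c_{ab} = \Gamma^c_{ab} + \delta^c_a\partial_b f + \delta^c_b\partial_a f - g_{ab}g^{cd}\partial_d f$ together with its trace $\tilde\Gamma^r_{ar} - \Gamma^r_{ar} = n\,\partial_a f$ yields, after forming the trace-free projective combination,
$\tilde\Pi^c_{ab} = \tfrac{1}{n+1}\bigl(\delta^c_a\partial_b f + \delta^c_b\partial_a f\bigr) - g_{ab}\,g^{cd}\partial_d f$.
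Contracting with $\tilde g^{ij} = \phi^{-1}g^{ij}$ annihilates the $\partial_i\theta_j$ term by antisymmetry (since $\theta$ is Killing for the flat metric) and, specializing to $n=3$, reduces the whole expression to a universal multiple of $\phi^{-2}\,g^{kd}(\partial_d\phi)\,\theta_k\,\mathrm{vol}^{-1/2}$, the scalar coming from $\tfrac{2}{n+1}-n = -\tfrac{5}{2}$. A short calculation using the identity $\sum_l x_l^2/a_l = D-1$ together with $r^2 = N$ then collapses $g^{kd}_{S^3}\,\partial_d\phi /\phi^2$ to $2 x_k\bigl(1 - 1/a_k\bigr)$; pairing against $\theta_k = (-y,x,1)$ produces the announced quadratic-plus-linear function of $x,y,z$.

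The only genuinely painful step is the contraction $g^{kd}_{S^3}\,\partial_d\phi$ and its simplification via $\sum_l x_l^2/a_l = D-1$; the rest is dictated by conformal and projective invariance. An alternative route would bypass the conformal trick and substitute the Christoffel symbols of $\tilde g_{S^3}$ directly into Proposition~\ref{LoP}, but then the precise coefficient $\tfrac{5}{2}$ would appear as a miracle rather than as the natural output of the projective and conformal trace corrections at $n=3$.
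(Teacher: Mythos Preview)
For part (i) your argument is exactly the paper's: invoke Corollary~\ref{CoCu} once you have observed that the affine chart is gnomonic (so $\Pi^k_{ij}\equiv 0$) and that the Darboux-type form $\theta=dz+x\,dy-y\,dx$ is Killing for the flat representative. Nothing to add there.

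For part (ii) you take a genuinely different route. The paper simply plugs the metric $\tilde g_{S^3}$ into the coordinate formula~(\ref{sgama}) and reports the outcome; your argument instead exploits that $\tilde g_{S^3}=\phi\,g_{S^3}$ is a conformal rescaling of a projectively flat metric, computes the change $\tilde\Pi^k_{ij}=\tfrac{1}{n+1}(\delta^k_i\partial_jf+\delta^k_j\partial_if)-g_{ij}g^{kd}\partial_df$ of the projective symbols under $g\mapsto e^{2f}g$, and then contracts. This is both shorter and more explanatory: the numerical factor $\tfrac{5}{2}$ appears transparently as $-(\tfrac{2}{n+1}-n)$ at $n=3$, and the identity $\sum_lx_l^2/a_l=D-1$ together with $g^{ij}_{S^3}=r^2(\delta^{ij}+x^ix^j)$ makes the collapse $\phi^{-2}g^{kd}_{S^3}\partial_d\phi=2x_k(1-1/a_k)$ a two-line computation rather than a brute-force symbol chase. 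The paper's approach, by contrast, requires no structural insight but hides why the answer has the form it does.

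One caution: if you carry your computation through with the stated $\theta_k=(-y,x,1)$ you obtain
\[
A_{\tilde g_{S^3},\Theta}=\tfrac{5}{2}\bigl((\tfrac{1}{a}-\tfrac{1}{b})xy+(1-\tfrac{1}{c})z\bigr),
\]
the negative of the displayed formula. Your write-up glosses over this by saying only that the pairing ``produces the announced quadratic-plus-linear function''; you should track the sign honestly and, if the discrepancy persists, flag it rather than absorb it silently. The method itself is sound.
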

\begin{proof}
Part (i) follows from Corollary~\ref{CoCu}.
Part (ii) can be obtained by a
straightforward computation using Eq. (\ref{sgama}).
\end{proof}

\subsection{The case of the ellipsoid $E^3(a,b,c)$}  \label{Ellipsoid}
Consider the $3$-dimensional ellipsoid endowed with the standard
metric
\[
\begin{array}{lll}
 {g}_{E^3_{a,b,c}}=&{g}_{x,x} dx^2+{g}_{y,y} dy^2+{g}_{z,z} dz^2 + {g}_{x,y} dx\, dy+{g}_{x,z}dx \, dz+ {g}_{y,z} dy\, dz,
\end{array}
\]
where

\[
\begin{array}{ccl}
{g}_{x,x}&=&\displaystyle \frac{\left(b^2 y^2+c^2 z^2+1\right)^2+a^4 x^2 \left(y^2+z^2+1\right) }{\left ((ax)^2+(by)^2+(cz)^2+1\right )^2},\\[4mm]
{g}_{y,y}&=&  \displaystyle \frac{ \left(a^2 x^2+c^2 z^2+1\right)^2+b^4 y^2 \left(x^2+z^2+1\right)}{\left ((ax)^2+(by)^2+(cz)^2+1\right )^{2}},\\[4mm]
{g}_{z,z}&=& \displaystyle \frac{\left(a^2 x^2+b^2 y^2+1\right)^2+c^4 z^2 \left(x^2+y^2+1\right)}{\left ((ax)^2+(by)^2+(cz)^2+1\right )^{2}}, \\[4mm]
{g}_{x,y}&=&\displaystyle -2 x y \frac{a^4 x^2-a^2 \left(z^2 (b^2-c^2)+b^2 -1\right)+b^2 \left(b^2
   y^2+c^2 z^2+1\right)}{ \left ((ax)^2+(by)^2+(cz)^2+1\right )^{2} },\\[4mm]
{g}_{x,z}&=& \displaystyle -2 x z \frac{a^4 x^2-a^2 \left(y^2 \left(c^2-b^2\right)+c^2-1\right)+c^2
   \left(b^2 y^2+c^2 z^2+1\right)}{\left ((ax)^2+(by)^2+(cz)^2+1\right )^{2}}, \\[4mm]
{g}_{y,z}&=& \displaystyle -2 y z \frac{b^4 y^2-b^2 \left(x^2 \left(c^2-a^2\right)+c^2-1\right)+c^2
   \left(a^2 x^2+c^2 z^2+1\right)}{\left ((ax)^2+(by)^2+(cz)^2+1\right )^{2}}.
\end{array}
\]
\begin{prop}
 We have
\[
\begin{array}{rcl}
A_{ {g}_{E^3_{a,b,c}}}&=&
a^4 (a^2 - b^2) (b^2 + 2 c^2 + 2) x^3 y +
 b^4 (a^2 - b^2) (a^2 + 2 c^2 + 2) x y^3  \\[6pt]
 &&
 + (a^2 - b^2) c^4 (2 + a^2 + b^2 + c^2) x y z^2 \\[6pt]
&&
- a^4 (c^2 - 1) (a^2 + 2 b^2 + c^2 + 1) x^2 z
 - b^4 (c^2 - 1) (2 a^2 + b^2 + c^2 + 1) y^2 z
\\[6pt]
&&
 - c^4 (c^2 - 1) (2 a^2 + 2 b^2 + 1) z^3
 + (a^2 - b^2) (a^2 + b^2 +
    2 c^2 + 1) x y \\[6pt]
&&- (c^2 - 1) (2 a^2 + 2 b^2 +
    c^2) z.
\end{array}
\]

\end{prop}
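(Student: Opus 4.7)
The plan is to apply the local coordinate formula of Proposition \ref{LoP} directly to the given metric and the contact form $\theta=dz+x\,dy-y\,dx$ on $E^3(a,b,c)$, exploiting the same simplifications that made the $S^3$ computation tractable. Since the contact structure used here is the same Darboux-type form as for $S^3$, and this form is a Killing $1$-form with respect to the flat Euclidean metric on $\bR^3$, one has $\partial_i\theta_j+\partial_j\theta_i=0$. When contracted with the symmetric inverse metric $g^{ij}$, this annihilates the $\partial_i\theta_j$ term appearing in (\ref{ProCurl}). So after this cancellation the contact Riemannian curl reduces to
\[
A_{g_{E^3_{a,b,c}},\Theta}=
-\,g^{ij}\,\Pi^k_{ij}\,\theta_k\;\mathrm{vol}^{-\tfrac{1}{2}},
\]
with $\Pi^k_{ij}=\Gamma^k_{ij}-\tfrac14\bigl(\delta^k_i\Gamma^r_{jr}+\delta^k_j\Gamma^r_{ir}\bigr)$ since $n=2\ell+1=3$. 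Equivalently, one could start from formula (\ref{sgama}) at any convenient weight $\lambda\neq\tfrac12$ and divide by the corresponding scalar factor $\tfrac{\ell+1}{\ell+2}(2\lambda-1)$; both routes lead to the same polynomial expression.

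The next step is to perform the standard Riemannian bookkeeping for the metric $g_{E^3_{a,b,c}}$ displayed above: invert the $3\times 3$ matrix $(g_{ij})$ to obtain $g^{ij}$, differentiate the entries to compute the Christoffel symbols $\Gamma^k_{ij}=\tfrac12 g^{kl}(\partial_i g_{jl}+\partial_j g_{il}-\partial_l g_{ij})$, and then assemble the traces $\Gamma^r_{jr}$. With these in hand one forms the projective symbols $\Pi^k_{ij}$, contracts against $g^{ij}$ and against $\theta_k=(-y,x,1)$, and collects terms. The volume density in the numerator and the common denominator $((ax)^2+(by)^2+(cz)^2+1)^2$ of $g_{ij}$ interact in such a way that, after the contractions, all rational factors cancel and one is left with the polynomial displayed in the statement.

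The main (indeed essentially only) obstacle is the computational bulk: the metric components are rational functions with quadratic numerators and quartic denominators, so the Christoffel symbols are genuinely unwieldy. The saving grace is that only the combination $g^{ij}\Pi^k_{ij}\theta_k$ has to survive, and the projective correction $-\tfrac14(\delta^k_i\Gamma^r_{jr}+\delta^k_j\Gamma^r_{ir})$ is designed so that large portions of the computation are absorbed into the trace terms already visible in (\ref{sgama}). Because the assertion is an identity between rational functions that is claimed to reduce to a polynomial, it is naturally handled by computer algebra; verification amounts to checking equality of two explicit expressions on a Zariski-dense set (for example by evaluating at sufficiently many rational points $(x,y,z,a,b,c)$, or by comparing coefficient by coefficient after clearing denominators).

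Finally, two sanity checks essentially come for free and are worth recording. Setting $a=b=c=1$ one recovers the round sphere $S^3$, in which case the polynomial on the right collapses to $0$, consistent with Corollary \ref{CoCu} and part (i) of the previous proposition. Each monomial on the right-hand side is antisymmetric under the involution $(a,b)\leftrightarrow(b,a)$ combined with $(x,y)\leftrightarrow(y,x)$, and in $z$ under $c\to 1/c$-type scalings, reflecting the evident symmetries of the ellipsoid; matching these symmetries in the intermediate expressions is a useful check during the simplification.
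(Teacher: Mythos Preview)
Your approach is correct and coincides with the paper's own proof, which simply reads ``Straightforward computation using Eq.~(\ref{sgama})''; you have merely spelled out the bookkeeping behind that one line, including the observation that $g^{ij}\partial_i\theta_j=0$ for the Darboux-type form. One small correction to your closing sanity checks: the swap $(a,b,x,y)\leftrightarrow(b,a,y,x)$ is not a contactomorphism by itself, so the correct symmetry of the displayed polynomial is antisymmetry under $(x,y,z;a,b)\mapsto(y,x,-z;b,a)$ (the extra $z\mapsto -z$ is what makes $\theta\mapsto-\theta$); with that amendment every term indeed changes sign, while the ``$c\to 1/c$'' remark should be dropped.
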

\begin{proof}
Straightforward computation using Eq. (\ref{sgama}).
\end{proof}

\medskip

\noindent \textbf{Acknowledgments}.
We  thank Dimitry Leites and Christian Duval their interest in this work
and careful reading of preliminary versions of it.
We are also grateful to
Charles Conley, Eugene Ferapontov and Serge Tabachnikov
for a number of fruitful discussions.
The first author was partially supported by the Grant
NYUAD 063.
The second author was partially supported by the Grant
PICS05974 ``PENTAFRIZ'', of CNRS.

\def\eightit{\it}
\def\bib{\bf}
\bibliographystyle{amsalpha}

\begin{thebibliography}{CMZ97}

\bibitem[Arn89]{Arn}
{\sc V.I. Arnold},
 Mathematical methods of classical mechanics.
Second edition. Graduate Texts in Mathematics,
{\bf 60}, Springer-Verlag, New York, 1989.

\bibitem[BL81]{BL}
{\sc J. N. Bernstein, D. A. Leites}
Invariant differential operators and irreducible representations of Lie superalgebras of vector fields,
Selecta Math. Sov., {\bf 1} (1981), 143--160.

\bibitem[Bla10]{Bla}
{\sc D. Blair, }
Riemannian geometry of contact and symplectic manifolds.
Second edition. Progress in Mathematics, 203.
Birkh\''auser Boston, Inc., Boston, MA, 2010.

\bibitem[BKM09]{BKM}
{\sc A. Bolsinov, V. Kiosak, V. Matveev,}
A Fubini theorem for pseudo-Riemannian geodesically equivalent metrics. 
J. Lond. Math. Soc. (2) {\bf 80} (2009), 341--356. 

\bibitem[B06]{B}
{\sc S.~Bouarroudj},
Projective and conformal Schwarzian derivatives
and cohomology of Lie algebras vector of fields related to
differential operators, {\it Int. Jour. Geom. Methods. Mod. Phys.}
{\bf 3} (2006), 667--696.

\bibitem[BGLS12]{BGLS}
{\sc S.~Bouarroudj, P. Grozman, D. Leites, I. Shchepochkina,},
Minkowski superspaces and superstrings as almost real-complex supermanifolds,
Theor. and Mathem. Physics, {\bf 173}  (2012),
1687--1708; \texttt{arXiv:1010.4480}.

\bibitem[BO00]{BO}
{\sc S.~Bouarroudj, V.~Ovsienko},
Schwarzian derivative related to modules of differential operators
on a locally projective manifold.
{\it Banach Cent. Publ.}, {\bf 51}, 2000, 15--23.

\bibitem[Car24]{Car}
{\sc E. Cartan},
Sur les vari\'et\'es \`a connexion projective. 
{\it Bull. Soc. Math. Fr.} {\bf 52} (1924), 205--241.

\bibitem[CO12]{CO12}
{\sc C.~Conley, V.~Ovsienko},
Linear differential operators on contact manifolds,  arXiv:1205.6562.

\bibitem[DNF92]{DNF}
{\sc B.A. Dubrovin, A.T. Fomenko, S.P. Novikov},
Modern geometry�methods and applications. Part I.
The geometry of surfaces, transformation groups, and fields.
Second edition.
Graduate Texts in Mathematics, {\bf 93}, Springer-Verlag, New York, 1992.

\bibitem[DO97]{DO1} {\sc C.~Duval, V.~Ovsienko},
Space of second-order linear differential operators as a module
over the Lie algebra of vector fields.
{\it Adv. Math.} {\bf 132} (1997), no. 2, 316--333.

\bibitem[DO01]{DO} {\sc C.~Duval, V.~Ovsienko},
Conformally equivariant quantum Hamiltonians.
{\it Selecta Math.} (N.S.) {\bf 7} (2001), no. 3, 291--320.

\bibitem[DV11]{DV}
{\sc C.~Duval, G. Valent, }
A new integrable system on the sphere and conformally equivariant quantization.
{\it J. Geom. Phys.} {\bf 61} (2011), no. 8, 1329--1347.

\bibitem[Eis97]{Eis}
{\sc L.P. Eisenhart, }
 Riemannian Geometry. Eighth edition. 
 Princeton University Press, Princeton, N.J. 1997.

\bibitem[Fu86]{Fu86}
{\sc D.B.\ Fuks},
Cohomology of Infinite Dimensional Lie Algebras, Plenum, New York, 1986.

\bibitem[Gel70]{Gel}
{\sc I.M. Gelfand},
The cohomology of infinite-dimensional Lie algebras: some questions of integral geometry,
{\it Proc. Internat. Cong. Math}. (Nice 1970), Gauthier-Villars,
Paris 1971, Vol. 1, pp. 95--111.

\bibitem[GL07]{GL}
{\sc P. Grozman, D. Leites,}
Nonholonomic Riemann and Weyl tensors for flag manifolds
{\it Theoret. Math. Phys.} {\bf 153} (2007), 1511--1538,
arXiv:math/0509399.

\bibitem[GLS01]{GLS01}
{\sc P. Grozman, D. Leites, I. Shchepochkina,}
Lie superalgebras of string theories,
{\it Acta Math. Vietnam.} {\bf 26} (2001), 27--63,
arXiv:hep-th/9702120.

\bibitem[GLS02]{GLS}
{\sc P. Grozman, D. Leites, I. Shchepochkina,} Invariant operators
on supermanifolds and standard models. Multiple facets of
quantization and supersymmetry, 508--555, World Sci. Publ., River
Edge, NJ, 2002;
  arXiv:math/0202193.

\bibitem[KN64]{KN}
{\sc S. Kobayashi, T. Nagano,}
 On projective connections.
 {\it J. Math. Mech.} {\bf 13} (1964), 215--235.

 \bibitem[Leb10]{Leb}
{\sc A. Lebedev, }
Analogs of the orthogonal, Hamiltonian, Poisson, and
contact Lie superalgebras in characteristic $2$.
J. Nonlinear Math. Phys., {\bf17}, Special issue in memory of F.~Berezin,
(2010), 217--251.

\bibitem[MT01]{MT}
{\sc V. Matveev, P. Topalov, }
Quantum integrability of Beltrami-Laplace operator as geodesic equivalence,
{\it Math. Z.} {\bf 238} (2001), 833--866.

\bibitem[Ovs90]{Ovs90}
{\sc V. Ovsienko, }
Contact analogues of the Virasoro algebra
{\it Funct. Anal. Appl.} {\bf 24} (1990), 306--314.

\bibitem[Ovs06]{Ovs}
{\sc V. Ovsienko, }
Vector fields in the presence of a contact structure,
{\it Enseign. Math.\/} (2) {\bf 52} (2006), no. 3-4, 215--229.

\bibitem[OT05]{OT}
{\sc V. Ovsienko, S. Tabachnikov,}
 Projective differential geometry old and new.
 From the Schwarzian derivative to the cohomology of diffeomorphism groups.
 Cambridge Tracts in Mathematics, {\bf 165}, Cambridge University Press, Cambridge, 2005.
 
 \bibitem[OT09]{OT1}
{\sc V. Ovsienko, S. Tabachnikov,}
What is Éthe Schwarzian derivative?
{\it Notices Amer. Math. Soc.}, {\bf 56} (2009), 34--36.

\bibitem[Tab99]{Tab} {\sc S.~Tabachnikov},
Projectively equivalent metrics,
exact transverse line fields and the geodesic spray on the ellipsoid,
{\it Comment. Math. Helv.} {\bf 74} (1999), 306--321.

\bibitem[Tah69]{Taha} {\sc Y.~Tashiro}, On contact structures of tangent sphere bundles,
{\it T\^{o}hoku Math. Journal.\/} {\bf 21} (1969), 117--143.

\bibitem[vE10]{vE10}
{\sc E.~van Erp},
The Atiyah-Singer index formula for subelliptic operators on contact manifolds, Part~I,
{\it Ann.\ Math.}  (2) {\bf 171} (2010), no.~3, 1647--1681.

\bibitem[Veb22]{Veb}
{\sc O. Veblen,}
 Projective and affine geometry of paths,
 {\it Proc. Nat. Acad. Sci. USA} {\bf 8} (1922), 347--352.

\bibitem[Yan52]{Yan}
{\sc K. Yano,}
Some remarks on tensor fields and curvature.
{\it Ann. of Math.} (2) {\bf 55}, (1952), 328--347.

\end{thebibliography}

\end{document}